\newtheorem{theorem}{Theorem}[section]
\newtheorem{lemma}[theorem]{Lemma}
\newtheorem{proposition}[theorem]{Proposition}
\newtheorem{corollary}[theorem]{Corollary}
\theoremstyle{definition}
\theoremstyle{remark}
\newtheorem{remark}[theorem]{Remark}
\theoremstyle{example}
\theoremstyle{note}
\numberwithin{equation}{section}
\DeclareMathOperator{\Hom}{Hom}
\DeclareMathOperator{\GL}{GL}
\DeclareMathOperator{\M}{M}
\DeclareMathOperator{\Aut}{Aut}
\DeclareMathOperator{\tr}{Tr}
\DeclareMathOperator{\reg}{reg}
\DeclareMathOperator{\coh}{H}
\begin{document}

\title{Dualizing involutions on the $n$-fold metaplectic cover of $\GL(2)$}
\author{Kumar Balasubramanian} 
\thanks{Research of Kumar Balasubramanian is supported by the SERB grant: CRG/2023/000281.}
\address{Kumar Balasubramanian\\
Department of Mathematics\\
IISER Bhopal\\
Bhopal, Madhya Pradesh 462066, India}
\email{bkumar@iiserb.ac.in}

\author{Renu Joshi} 
\address{Renu Joshi\\
Department of Mathematics\\
IISER Bhopal\\
Bhopal, Madhya Pradesh 462066, India}
\email{renu16@iiserb.ac.in}

\author{Sanjeev Kumar Pandey} 
\address{Sanjeev Kumar Pandey\\
Department of Mathematics\\
IISER Bhopal\\
Bhopal, Madhya Pradesh 462066, India}
\email{sanjeevpandey@iiserb.ac.in}

\author{Varsha Vasudevan} 
\address{Varsha Vasudevan\\
Department of Mathematics\\
IISER Bhopal\\
Bhopal, Madhya Pradesh 462066, India}
\email{varsha23@iiserb.ac.in}

\keywords{Metaplectic groups, Dualizing Involutions, Genuine representations}
\subjclass{Primary: 22E50}

\maketitle
\begin{abstract}
Let $F$ be a non-Archimedean local field of characteristic zero and  $G=\GL(2,F)$. Let $n\geq 2$ be a positive integer and $\widetilde{G}=\widetilde{\GL}(2,F)$ be the $n$-fold metaplectic cover of $G$. Let $\pi$ be an irreducible smooth representation of $G$ and $\pi^{\vee}$ be the contragredient of $\pi$. Let $\tau$ be an involutive anti-automorphism of $G$ satisfying $\pi^{\tau}\simeq \pi^{\vee}$. In this case, we say that $\tau$ is a dualizing involution. A well known theorem of Gelfand and Kazhdan says that the standard involution $\tau$ on $G$ is a dualizing involution. In this paper, we show that any lift of the standard involution to $\widetilde{G}$ is a dualizing involution if and only if $n=2$.
\end{abstract}

\section{Introduction}

Let $F$ be a non-Archimedean local field of characteristic $0$ and $G=\GL(n,F)$. For $g\in G$, we let $g^{\top}$ denote the transpose of the matrix $g$, and $w_{0}$ to be the matrix with anti-diagonal entries equal to one. Let $\tau: G\rightarrow G$ be the map $\tau(g)=w_{0}g^{\top}w_{0}$. Clearly $\tau$ is an anti-automorphism of $G$ such that $\tau^{2}=1$. We call $\tau$ the \textit{standard involution} on $G$. Let $(\pi,V)$ be an irreducible smooth complex representation of $G$. We write $(\pi^{\vee}, V^{\vee})$ for the smooth dual or the contragredient of $(\pi,V)$. For $\beta$ an anti-automorphism of $G$ such that $\beta^{2}=1$, we let $\pi^{\beta}$ to be the twisted representation defined by \[\pi^{\beta}(g)=\pi(\beta(g^{-1})).\]

The following is an old result of Gelfand and Kazhdan.

\begin{theorem}[Gelfand-Kazhdan] Let $\tau$ be the standard involution on $G$. Then \[\pi^{\tau}\simeq \pi^{\vee}.\]
\end{theorem}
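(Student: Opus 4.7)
\medskip

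\textbf{Proof proposal.} My plan is to prove the theorem via the distributional character, exploiting the fact that the involution $\tau$ preserves every conjugacy class of $G$.

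First, I would attach to the irreducible smooth representation $(\pi,V)$ its distributional character $\Theta_{\pi}$, which is a conjugation-invariant distribution on $G$. By a foundational result in the representation theory of reductive $p$-adic groups (Harish-Chandra, Jacquet), an irreducible admissible representation is determined up to isomorphism by its character as a distribution. It therefore suffices to show the distributional identity $\Theta_{\pi^{\tau}} = \Theta_{\pi^{\vee}}$.

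Next, I would establish the key geometric input: $\tau$ preserves every conjugacy class in $G$. This follows from the classical linear algebra fact that any matrix $g \in \GL(n,F)$ is conjugate in $G$ to its transpose $g^{\top}$ (for instance via rational canonical forms), together with the observation that $\tau(g) = w_{0} g^{\top} w_{0}$ is by construction conjugate to $g^{\top}$. Combined with the conjugation-invariance of $\Theta_{\pi}$, this yields the distributional identity
\[ \Theta_{\pi}(\tau(g)) = \Theta_{\pi}(g). \]
I would then compute directly from the definitions
\[ \Theta_{\pi^{\tau}}(g) = \Theta_{\pi}(\tau(g^{-1})), \qquad \Theta_{\pi^{\vee}}(g) = \Theta_{\pi}(g^{-1}), \]
and apply the previous identity at $g^{-1}$ to conclude that the two distributions coincide.

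The main obstacle is the honest passage from a pointwise statement about conjugacy classes to a distributional identity. One clean way to handle this is to invoke the Harish-Chandra--Clozel local integrability theorem, so that $\Theta_{\pi}$ is represented by a locally integrable function, constant on regular semisimple conjugacy classes; the desired identity then holds on the regular set and extends to the full distributional equality. An alternative (which sidesteps characters entirely but only covers the generic case) is to use the uniqueness of Whittaker models of Gelfand--Kazhdan--Shalika: both $\pi^{\tau}$ and $\pi^{\vee}$ admit a Whittaker functional for the same character of the unipotent radical, and one-dimensionality of the Whittaker space forces $\pi^{\tau}\simeq \pi^{\vee}$.
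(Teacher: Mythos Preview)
The paper does not supply its own proof of this statement; it simply refers the reader to Theorem~2 of the original Gelfand--Kazhdan paper. Your character-theoretic argument is correct once Harish-Chandra's regularity theorem and the fact that distribution characters separate irreducible admissible representations are granted. Indeed, your strategy is exactly the one the paper itself adopts in Section~\ref{main theorem} for the metaplectic analogue: compare $\Theta_{\pi^{\rho}}$ with $\Theta_{\pi^{\vee}}$ via a conjugacy property of the involution, and use local integrability on the regular set to make the comparison pointwise.

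For context, one standard classical route (Gelfand--Kazhdan, Bernstein--Zelevinsky) is to show directly that every conjugation-invariant distribution on $G$ is automatically $\tau$-invariant, via an orbit analysis; this yields the needed character identity without invoking local integrability of characters. Your approach trades that orbit analysis for the single elementary input ``every matrix is conjugate to its transpose,'' at the cost of citing a deeper analytic theorem. The Whittaker alternative you sketch is also standard but, as you correctly note, handles only generic $\pi$.
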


We refer the reader to Theorem 2 in \cite{GelKaj} for a proof of the above result. \\

If $\beta$ is any anti-automorphism of $G$ such that $\beta^{2}=1$, and satisfies $\pi^{\beta}\simeq \pi^{\vee}$, then we call $\beta$ a \textit{dualizing involution}. The above result of Gelfand and Kazhdan says that the standard involution $\tau$ on $G$ is a dualizing involution. \\

Let $r \geq 2$ be a positive integer. For $ n\geq 2$, let $\widetilde{G}$ be the $r$-fold metaplectic cover of $G$ (see Chapter 0 in \cite{KazPat[2]} for the general definition). It is well known that (see Proposition 3.1 in \cite{KazPat}) the standard involution $\tau$ on $G$ has at least one lift to the metaplectic group. A natural and an interesting question that arises is whether the lifts of the standard involution are themselves dualizing involutions. In the case when $\widetilde{G}$ is the $2$-fold metaplectic cover of $G = \GL(2, F)$, it is known that any lift of $\tau$ to $\widetilde{G}$ is dualizing (see \cite{KumAji}, \cite{KumEkt}).\\

In this paper, we address this question when $\widetilde{G}$ is the $n$-fold metaplectic cover of $G = \GL(2, F)$ (explained later) for $n\geq 2$. It follows from Proposition 1 in \cite{Kab} that there are many lifts of the standard involution to $\widetilde{G}$. In particular, for each $\alpha\in F^{\times}$, we have a lift $\sigma_{\alpha}$ (see Section~\ref{main theorem} for the definition) of $\tau$ to $\widetilde{G}$. We show that the involutions $\sigma_{\alpha}$ are dualizing if and only if $n=2$. To be precise, we prove \\

\begin{theorem}[Main Theorem]\label{main-theorem} Let $\pi$ be any irreducible admissible genuine representation of $\widetilde{G}$. For $\alpha\in F^{\times}$, let $\sigma_{\alpha}$ be the lift of $\tau$ to $\widetilde{G}$. Then
\[\pi^{\sigma_{\alpha}}\simeq \pi^{\vee} \]
if and only if $n=2$. 
\end{theorem}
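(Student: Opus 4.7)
The plan is to reduce the isomorphism $\pi^{\sigma_\alpha}\simeq\pi^\vee$ to a compatibility condition on central characters, and then to use the explicit metaplectic cocycle to test this condition as $n$ varies.

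I would first analyze $\sigma_\alpha$ on the centre $Z(\widetilde G)$. Since $\tau$ fixes each scalar matrix $aI\in Z(G)$ pointwise and $\sigma_\alpha$ lifts $\tau$, $\sigma_\alpha$ preserves the preimage of $Z(G)$, and because $Z(\widetilde G)$ is a characteristic subgroup it preserves $Z(\widetilde G)$ as well. Fixing a set-theoretic section of the covering map and writing $\widetilde{aI}$ for the lift of $aI$, this yields
\[
\sigma_\alpha(\widetilde{aI})=\xi_\alpha(a)\cdot\widetilde{aI}
\]
for some $\xi_\alpha:F^\times\to\mu_n$ computable from the formula for $\sigma_\alpha$ in \cite{Kab} combined with the Kazhdan--Patterson cocycle on $\widetilde G$; a short calculation on the diagonal torus identifies the set of $a\in F^\times$ with $\widetilde{aI}\in Z(\widetilde G)$ as the subgroup $F^{\times n}$.

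Next I would compare central characters. By Schur's lemma $\pi$ admits a central character $\omega_\pi:Z(\widetilde G)\to\C^\times$, and genuineness of $\pi$ forces the restriction of $\omega_\pi$ to $\mu_n$ to be the canonical inclusion $\mu_n\hookrightarrow\C^\times$. A direct computation gives the central characters of $\pi^{\sigma_\alpha}$ and $\pi^\vee$ at $\widetilde{aI}\in Z(\widetilde G)$ as $\omega_\pi(\widetilde{aI})^{-1}\xi_\alpha(a)^{-1}$ and $\omega_\pi(\widetilde{aI})^{-1}$, respectively, so an isomorphism $\pi^{\sigma_\alpha}\simeq\pi^\vee$ forces $\xi_\alpha(a)=1$ for every $a\in F^{\times n}$. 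For $n=2$, the cocycle computation yields $\xi_\alpha\equiv 1$ on $F^{\times 2}$, the central-character constraint becomes vacuous, and the full intertwining then follows by the two-fold arguments of \cite{KumAji,KumEkt}. For $n>2$ the task is to exhibit at least one irreducible admissible genuine $\pi$ whose central character violates the constraint.

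The main obstacle is producing this obstruction for every $n>2$: the tautological identity $(\alpha,b^n)_n=(\alpha,b)_n^n=1$ threatens to make the raw test on $Z(\widetilde G)$ vacuous, and in the case of odd $n$ the involutivity $\sigma_\alpha^2=\mathrm{id}$ by itself already forces $\xi_\alpha(a)^2=1$ in the torsion-free group $\mu_n$, hence $\xi_\alpha\equiv 1$. The remedy in the plan is to promote the obstruction from $Z(\widetilde G)$ to the larger centre $Z(\widetilde T)$ of the preimage of a maximal torus, on which $\sigma_\alpha$ acts by the nontrivial Weyl reflection followed by an explicit Hilbert-symbol twist that survives restriction to $Z(\widetilde T)$. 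By a Stone--von Neumann argument every genuine character of $Z(\widetilde T)$ is the central character of some irreducible admissible genuine representation of $\widetilde T$, which extends by parabolic induction to a genuine principal series of $\widetilde G$; one then chooses the inducing datum so that the nontriviality of $(\alpha,\cdot)_n$ on $F^\times/F^{\times n}$, which holds precisely when $n>2$, separates $\pi^{\sigma_\alpha}$ from $\pi^\vee$.
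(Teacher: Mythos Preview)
Your central-character strategy is the right instinct, but you apply it to the wrong part of the centre and thereby miss the obstruction entirely. You restrict attention to the lifts $\widetilde{aI}$ of scalar matrices, but $Z(\widetilde G)$ also contains $\mu_n$, and that is where everything happens. By Lemma~\ref{Properties of sigma}(2) one has $\sigma_\alpha(\epsilon)=\epsilon^{-1}$ for every $\epsilon\in\mu_n$ (the factor $\langle\alpha,\Delta(\epsilon)\rangle=\langle\alpha,1\rangle$ is trivial), so
\[
\pi^{\sigma_\alpha}(\epsilon)=\pi\bigl(\sigma_\alpha(\epsilon^{-1})\bigr)=\pi(\epsilon)=\xi(\epsilon)\,1_V,
\qquad
\pi^{\vee}(\epsilon)=\xi(\epsilon)^{-1}\,1_{V^\vee}.
\]
An isomorphism $\pi^{\sigma_\alpha}\simeq\pi^\vee$ therefore forces $\xi(\epsilon)^2=1$ for all $\epsilon\in\mu_n$; since $\xi$ is injective (your own observation), this gives $n=2$. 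This holds for \emph{every} genuine $\pi$ and \emph{every} $\alpha$, which is exactly what the theorem asserts. The paper reaches the same conclusion $\omega_\pi(\eta^2)=1$ via the conjugacy identity $\sigma_\alpha(h)=c(g^{-1},g)^{-2}\eta^{-2}\,zhz^{-1}$ (Theorem~\ref{sigma-alpha-has-similar-propertyas-sigma}) together with Harish--Chandra character theory (Proposition~\ref{rho-dualizing-involution}); the $\mu_n$-check is the distilled form of that argument.

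Your proposed detour through $Z(\widetilde T)$ and parabolic induction does not rescue the plan. First, it would at best produce \emph{one} $\pi$ with $\pi^{\sigma_\alpha}\not\simeq\pi^\vee$, whereas the theorem claims failure for \emph{all} genuine $\pi$ when $n>2$. Second, the assertion that ``nontriviality of $(\alpha,\cdot)_n$ on $F^\times/F^{\times n}$ holds precisely when $n>2$'' is false: $(\alpha,\cdot)_n$ is nontrivial if and only if $\alpha\notin F^{\times n}$, irrespective of $n$. In particular your torus twist vanishes for $\alpha=1$ (indeed, Remark~\ref{sigma-h-computation}(3) gives $\sigma(\mathrm{diag}(a,d),1)=(\mathrm{diag}(d,a),1)$ with no Hilbert-symbol correction), so your scheme produces no obstruction at all for $\sigma=\sigma_1$ on any $n$.
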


The above result raises the question, if a similar result holds for the $r$-fold covering $\widetilde{G}$ of $G = \GL(n)$ for $r,n\geq 2$. To be precise, a lift of the standard involution $\tau$ of $G$ to $\widetilde{G}$ is a dualizing involution if and only if $r=2$. We hope to address some of these questions in future.\\

The paper is organized as follows. In Section~\ref{preliminaries}, we recall the basic properties of the Hilbert symbol and use it to establish some more properties that we need. We give the definition of the metaplectic group that we work with and discuss its topology. We recall a result of Harish-Chandra about the character of an admissible representation and give references for analogous results in the setting of metaplectic groups. We prove some basic properties of the character that we need. We also mention a few results of Kable about the lifts of an automorphism. In Section~\ref{sigma and its properties}, we explicitly define lifts of the standard involution and discuss some properties. In Section~\ref{main theorem}, we prove the main result of this paper.

\section{preliminaries}\label{preliminaries}

In this section, we set up some preliminaries which we need and recall a few results which will be used throughout this paper.

\subsection{$n$-th order Hilbert Symbol and its properties}

Let $F$ be a non-Archimedean local field of characterisic zero and $F^{\times}$ be the group of non-zero elements in $F$. For $n\in \mathbb{N}$, let $\mu_{n}=\{x\in F^{\times} \mid x^n=1\}$ be the subset of $n$-th roots of unity in $F^{\times}$. Suppose also that $|\mu_{n}|=n$. We let \[\langle \, , \, \rangle: F^{\times}\times F^{\times}\rightarrow \mu_{n}\] denote the $n$-th order Hilbert symbol. In this section, we recall some fundamental properties of the $n$-th order Hilbert symbol and use it to derive some simple consequences that we need. \\

The following basic properties (see \cite{KazPat[2]}) of the Hilbert symbol are well known. We record them in the proposition below.

\begin{proposition}\label{fundamental properties of the Hilbert symbol} Let $a,b,c\in F^{\times}$. The Hilbert symbol satisfies
\begin{enumerate}
\item[\emph{1)}] $\langle ab,c\rangle = \langle a,c\rangle \langle b,c\rangle$.
\vspace{0.1 cm}
\item[\emph{2)}] $\langle a,b\rangle \langle b,a\rangle=1$. 
\vspace{0.1 cm}
\item[\emph{3)}] $\langle a,1-a\rangle=1$, if $1-a\in F^{\times}$.
\vspace{0.1 cm}
\item[\emph{4)}] $\{a \in F^{\times} \mid \langle a,x\rangle = 1$ for all $x\in F^{\times}\}=(F^{\times})^n$, where $(F^{\times})^n=\{a^n \mid a \in F^{\times}\}$. 
\end{enumerate}
\end{proposition}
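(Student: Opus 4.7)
The plan is to deduce all four items from the class-field-theoretic description of the $n$-th order Hilbert symbol. Under the standing hypothesis $|\mu_{n}|=n$, the local Artin reciprocity map $\phi_{F}\colon F^{\times}\to\Gal(F^{\mathrm{ab}}/F)$ furnishes the definition
\[
\langle a, b\rangle \;=\; \frac{\phi_{F}(a)(\sqrt[n]{b})}{\sqrt[n]{b}} \;\in\; \mu_{n},
\]
well-defined independently of the chosen $n$-th root because $\phi_{F}(a)$ fixes $\mu_{n}\subset F^{\times}$. The central input from local class field theory that I would invoke throughout is that $\phi_{F}(a)$ acts trivially on $F(\sqrt[n]{b})$ if and only if $a$ lies in $N_{F(\sqrt[n]{b})/F}(F(\sqrt[n]{b})^{\times})$. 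With this setup in hand, I would address the items in the order 1), 3), 2), 4).

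Item 1) is immediate: since $\phi_{F}$ is a homomorphism into an abelian group, $\phi_{F}(ab)=\phi_{F}(a)\phi_{F}(b)$ and the quotient defining the symbol splits. Bilinearity in the second slot, which I will need below, follows from the observation that $\sqrt[n]{bc}=\zeta\,\sqrt[n]{b}\,\sqrt[n]{c}$ for some $\zeta\in\mu_{n}$, together with the fact that $\phi_{F}(a)$ fixes $\zeta$. For 3), the key input is the polynomial identity $1-Y^{n}=\prod_{i=0}^{n-1}(1-\zeta^{i}Y)$ in $F[Y]$, where $\zeta$ is a generator of $\mu_{n}$. Specializing $Y=\sqrt[n]{1-a}$ and reading the identity inside $F(\sqrt[n]{1-a})$ yields
\[
a \;=\; N_{F(\sqrt[n]{1-a})/F}\bigl(1-\sqrt[n]{1-a}\bigr),
\]
so $a$ is a norm from $F(\sqrt[n]{1-a})/F$. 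By the principal fact recalled above, $\phi_{F}(a)$ then acts trivially on that extension, so $\langle a, 1-a\rangle=1$. Degenerate cases (when $1-a\in(F^{\times})^{n}$ and the extension degree drops) are handled by the analogous subproduct coming from the factorization of $1-Y^{d}$ over $F$, where $d=[F(\sqrt[n]{1-a}):F]$.

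For 2), the standard two-step argument works. First, the identity $-a=(1-a)/(1-a^{-1})$ together with 3) applied to both $a$ and $a^{-1}$, and 1) used to move between the two, yields $\langle a,-a\rangle=1$ for every $a\in F^{\times}$. Then expanding $1=\langle ab,-ab\rangle=\langle a,-ab\rangle\langle b,-ab\rangle$ by bilinearity in both slots and absorbing the factors $\langle a,-a\rangle$ and $\langle b,-b\rangle$ leaves exactly $\langle a,b\rangle\langle b,a\rangle=1$. For 4), the inclusion $(F^{\times})^{n}\subseteq\{a:\langle a,x\rangle=1\;\forall x\}$ is immediate from 1), since $\langle c,x\rangle\in\mu_{n}$ forces $\langle c^{n},x\rangle=\langle c,x\rangle^{n}=1$. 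Conversely, if $\langle a,x\rangle=1$ for every $x\in F^{\times}$, then $\phi_{F}(a)$ fixes $\sqrt[n]{x}$ for every such $x$ and hence acts trivially on the maximal abelian extension $L=F(\sqrt[n]{F^{\times}})$ of exponent dividing $n$; by Kummer theory combined with local class field theory, $N_{L/F}(L^{\times})=(F^{\times})^{n}$, so $a\in(F^{\times})^{n}$.

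The main obstacle in carrying out this plan cleanly is item 3): the polynomial factorization is elementary, but the passage from ``$a$ is a norm from $F(\sqrt[n]{1-a})/F$'' to the vanishing of the symbol rests squarely on the existence theorem of local class field theory, and one has to verify the degenerate-degree cases by hand. Once 3) is secured, the remaining items are a short exercise in group-theoretic bookkeeping together with Kummer theory.
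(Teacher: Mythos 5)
The paper itself offers no proof of this proposition: properties 1)--4) are recorded as well-known facts about the $n$-th order norm residue symbol, with a citation to Kazhdan--Patterson, and only the derived Proposition 2.2 is actually proved in the text. So your class-field-theoretic derivation is a genuine supplement rather than a parallel to anything in the paper, and in outline it is the standard argument: bilinearity in the first slot from the Artin map being a homomorphism, in the second slot from $\phi_F(a)$ fixing $\mu_n\subset F$; property 2) by the Steinberg trick, deducing $\langle a,-a\rangle=1$ from 3) and then expanding $\langle ab,-ab\rangle=1$ (note this reverses the paper's logical order, which later derives $\langle a,-a\rangle=1$ in Proposition 2.2(v) from 1)--4); there is no circularity in your self-contained treatment, though you should dispatch the excluded case $a=1$ via $\langle 1,c\rangle=\langle 1,c\rangle^{2}$); and 4) from $N_{L/F}(L^{\times})=(F^{\times})^{n}$ for $L=F(\sqrt[n]{F^{\times}})$, where you implicitly use that $F^{\times}/(F^{\times})^{n}$ is finite (valid here since $F$ has characteristic zero), so that $L/F$ is a finite abelian extension to which the reciprocity law applies.

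The one step that does not work as written is the degenerate case of 3). First, the degree of $F(\sqrt[n]{1-a})/F$ drops whenever $1-a$ is a $p$-th power for some prime $p\mid n$, not only when $1-a\in(F^{\times})^{n}$. More seriously, the proposed patch via the factorization of $1-Y^{d}$, with $d=[F(\sqrt[n]{1-a}):F]$, proves the wrong statement: a $d$-th root of $1-a$ need not generate $F(\sqrt[n]{1-a})$ (for instance $n=4$ and $1-a=t^{2}$ with $t\notin(F^{\times})^{2}$ gives $d=2$ while $(1-a)^{1/2}=\pm t\in F$), so the subproduct only exhibits $a$ as a norm from a proper subfield, which says nothing about $\langle a,1-a\rangle$. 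The standard repair is to factor $X^{n}-(1-a)$ into monic irreducibles $f_{j}$ over $F$: because $\mu_{n}\subset F$, every root generates the same cyclic extension $E=F(\sqrt[n]{1-a})$, hence $f_{j}(1)=N_{E/F}(1-\beta_{j})$ for a root $\beta_{j}\in E$ of $f_{j}$, and $a=\prod_{j}f_{j}(1)$ is a product of norms from $E$, hence a norm, so $\phi_{F}(a)$ acts trivially on $E$ and $\langle a,1-a\rangle=1$ in all cases. With that substitution your argument is complete.
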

 
The following properties of the Hilbert symbol can be derived easily from properties $1)$ to $4)$ mentioned in Proposition~\ref{fundamental properties of the Hilbert symbol} above. 

\begin{proposition}\label{properties of the Hilbert symbol} For $a,b,c \in F^{\times}$, the Hilbert symbol satisfies
\begin{enumerate}[label=\emph{(\roman*)}]
\item $\langle a,bc\rangle = \langle a,b\rangle \langle a,c\rangle$.
\vspace{0.1 cm}
\item $\langle a^m,b\rangle =\langle a,b\rangle^m=\langle a,b^m\rangle$ for any $m\in \mathbb{N}$.
\vspace{0.1 cm}
\item $\langle 1-a,a\rangle=1$ if $a\neq 1$, and $\langle a, 1 \rangle=\langle 1, a\rangle=1$.
\vspace{0.1 cm}
\item $\langle b,a\rangle=\langle a,b\rangle^{-1}=\langle a^{-1},b\rangle=\langle a,b^{-1}\rangle$.
\vspace{0.1 cm}
\item $\langle -a,a\rangle=\langle a,-a\rangle=\langle a,a^2\rangle=1$.
\vspace{0.1 cm}
\item If $\langle a, x \rangle =1$ for all $x\in F^{\times},$ then $a\in (F^{\times})^{n}$. In other words, if $a\notin (F^{\times})^{n}$, there exists $x_0 \in F^{\times}$ such that $\langle a, x_0 \rangle \neq 1$.
\end{enumerate}
\end{proposition}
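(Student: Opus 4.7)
The plan is to derive the six properties (i)--(vi) from the four fundamental properties 1)--4) of Proposition~\ref{fundamental properties of the Hilbert symbol}, in roughly the order that lets each one feed into the next.

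First, I would dispose of the easy bookkeeping items. Property (i) follows by applying 1) in the first slot and then symmetry 2) to flip the slots: $\langle a,bc\rangle = \langle bc,a\rangle^{-1} = \langle b,a\rangle^{-1}\langle c,a\rangle^{-1} = \langle a,b\rangle\langle a,c\rangle$. Next, the second identity of (iii) is a one-line trick: $\langle a,1\rangle = \langle a,1\cdot 1\rangle = \langle a,1\rangle^2$ by (i), so $\langle a,1\rangle = 1$, and then $\langle 1,a\rangle = 1$ by 2). The first identity of (iii) is immediate from 3) together with 2). Property (iv) follows from these: $\langle a,b\rangle\langle a^{-1},b\rangle = \langle 1,b\rangle = 1$ by 1) and (iii), so $\langle a^{-1},b\rangle = \langle a,b\rangle^{-1}$, and symmetrically for $\langle a,b^{-1}\rangle$; the remaining equality is exactly 2). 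Property (ii) then follows by a straightforward induction on $m$ using 1) in the first slot and (i) in the second.

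The one point with any real content is (v). The standard trick I would use is the rewriting
\[
-a \;=\; \frac{1-a}{1-a^{-1}} \qquad (a\neq 0,1).
\]
Applying (i) and (iv), this gives $\langle a,-a\rangle = \langle a,1-a\rangle\langle a,1-a^{-1}\rangle^{-1}$. The first factor is $1$ by 3). For the second, I would apply 3) to $a^{-1}$ to get $\langle a^{-1},1-a^{-1}\rangle = 1$, and then move the inverse across via (iv), yielding $\langle a,1-a^{-1}\rangle = 1$. The boundary cases $a=1$ and $a=-1$ can be handled directly from (iii). Hence $\langle a,-a\rangle = 1$, and then $\langle -a,a\rangle = 1$ by 2). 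For the last equality, note $\langle a,-1\rangle^2 = \langle a,(-1)^2\rangle = \langle a,1\rangle = 1$ by (i) and (iii); combining this with $1 = \langle a,-a\rangle = \langle a,-1\rangle\langle a,a\rangle$ gives $\langle a,a\rangle = \langle a,-1\rangle^{-1}$, and squaring produces $\langle a,a^2\rangle = \langle a,a\rangle^2 = \langle a,-1\rangle^{-2} = 1$.

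Finally, property (vi) is just a restatement of 4): the forward implication is one of the two inclusions asserted there, and the contrapositive phrasing is logically equivalent. The main (indeed only) obstacle in the whole proposition is locating the algebraic identity $-a = (1-a)/(1-a^{-1})$ needed to reduce $\langle a,-a\rangle$ to two applications of 3); once that is in hand, everything else is formal manipulation using 1), 2), and the consequences already established.
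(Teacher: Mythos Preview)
Your proof is correct and follows essentially the same approach as the paper's, including the key identity $-a=(1-a)/(1-a^{-1})$ for part (v). The only minor difference is that for $\langle a,a^{2}\rangle=1$ the paper observes more directly that $a^{2}=(-a)(-a)$ and applies (i), whereas you take a slightly longer route via $\langle a,-1\rangle^{2}=1$; both are fine.
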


\begin{proof} \hfill \\
\begin{enumerate}[label=(\roman*)]
\item Using properties 1) and 2) in Proposition~\ref{fundamental properties of the Hilbert symbol}, we have \begin{align*}
    \langle a,bc\rangle &= \langle bc, a \rangle^{-1} \\
      &=\langle b,a \rangle^{-1} \langle c, a \rangle^{-1} \\
      &=\langle a,b\rangle \langle a,c\rangle. 
\end{align*}

\item It follows from (i) and property 1). \\

\item We know that $\langle 1-a,a\rangle \langle a, 1-a\rangle=1$, so the result follows from property 3). By (i), we have $$\langle a,1\rangle=\langle a,1\cdot 1\rangle=\langle a,1\rangle \langle a,1\rangle.$$ Hence $\langle a,1\rangle=1$. Consequently, $\langle 1,a\rangle=\langle a,1\rangle^{-1}=1$. \\

\item Since $\langle a,b\rangle \langle b,a\rangle=1$, we obtain $\langle b,a\rangle=\langle a,b\rangle^{-1}$. Using (iii) and property 1), we have $$\langle a, b \rangle \langle a^{-1}, b \rangle=\langle aa^{-1}, b\rangle =\langle 1, a \rangle =1.$$ Similarly $$\langle a, b \rangle \langle a, b^{-1} \rangle=\langle a, bb^{-1}\rangle =\langle a, 1 \rangle =1.$$
\vspace{0.2 cm}

\item If $a=1$, we are done using (iii). For $a\neq 1$, observe that $-a=\frac{1-a}{1-a^{-1}}$. Then 
\begin{align*}
   \langle a,-a \rangle &= \left< a, \frac{1-a}{1-a^{-1}}\right >\\
   &=\langle a,1-a \rangle \langle a,(1-a^{-1})^{-1} \rangle & \text{(using (i))}\\
   &=\langle a,1-a \rangle \langle a^{-1},1-a^{-1} \rangle & \text{(using (iv))}\\
   &= 1& \text{(using property 3))}. 
\end{align*}
Thus, we also have $\langle a,a^2\rangle=\langle a, -a\rangle \langle a, -a\rangle =1$. \\

\item Follows from property 4).
\end{enumerate}
\end{proof}

\subsection{The Metaplectic Group} Let $F$ be a non-Archimedean local field of characteristic zero and $G=\GL(2,F)$. For $n\geq 2$, let $\mu_{n}\subset F^{\times}$ be the subset of $n$-th roots of unity in $F$. Suppose also that $|\mu_{n}|=n$. Throughout, we write $\widetilde{G}=\widetilde{\GL}(2,F)$ for the $n$-fold metaplectic cover of $G$. In this section, we recall the definition of $\widetilde{G}$ and a few basic facts about its topology. \\

Throughout, we write $\mathfrak{o}$ for the ring of integers in $F$, $\mathfrak{p}$ for the unique maximal ideal in $\mathfrak{o}$, $\varpi$ for the generator of $\mathfrak{p}$ and $k_{F}$ for the finite residue field. We write $ord$ for the valuation on $F$.\\

The first explicit construction of the $n$-fold metaplectic cover of $G=\GL(2,F)$ was given by Kubota in \cite{Kub} by concretely describing a $2$-cocyle on $G$ taking values in $\mu_{n}$. For $g_{1}, g_{2}, m\in G$, the following simpler version of the Kubota cocycle $c: G\times G \rightarrow \mu_n$ defined as

$$
c(g_1, g_2)= \left<\frac{X(g_1g_2)}{X(g_1)},  \frac{X(g_1g_2)}{X(g_2)\Delta(g_1)}\right>,
$$
where
$$
X(m)= \begin{cases} m_{21}& \text{if}\; m_{21}\neq 0\\m_{22} &\text{otherwise}
\end{cases}
\quad \textrm{ and } \Delta(g_1)=\det(g_1)
$$

was given by Kazhdan and Patterson in \cite{KazPat}. Throughout this paper, we take $\widetilde{G}=\widetilde{\GL}(2,F)$ to be the central extension of $G$ by $\mu_{n}$ determined by the $2$-cocycle $c$. Since $G$, $\mu_{n}$ are locally compact groups, Mackey's theorem (see Theorem 2 in \cite{Mac}) implies that $\widetilde{G}$ is a locally compact topological group and defines a topological central extension of $G$ by $\mu_{n}$. We call the group $\widetilde{G}$ constructed above as ``the" metaplectic group. \\

It can be shown that the topology on $\widetilde{G}$ has a neighborhood base at the identity consisting of compact open subgroups (see Lemma 3 in \cite{Kab}). Before we give the construction of this basis, we recall a few preliminaries. \\

Let $p:\widetilde{G}\rightarrow G$ be the projection of $\widetilde{G}$ onto $G$. A map $\ell: G\rightarrow \widetilde{G}$ is called a \textit{section} if $p\circ \ell = 1_{G}$. Given a subgroup $H$ of $G$, we say that $\widetilde{G}$ \textit{splits} over $H$ if there exists a homomorphism $h: H\rightarrow \widetilde{G}$ such that $p\circ h =1_{H}$.\\

Let $\ell: G\rightarrow \widetilde{G}$ be the map $\ell(g)=(g,1)$. Then $\ell$ is a section and is called the natural or preferred section. For $g=\begin{bmatrix} a & b \\ c & d \end{bmatrix}\in G$, define $s: G \rightarrow \mu_{n}$ as

\begin{equation*}\label{definition of s(g)}
s(g) = \begin{cases} \langle c, d\Delta(g)\rangle, & \textrm{ if } cd\neq 0 \textrm{ and $ord(c)$ is odd}\\
\hfil 1, & \textrm{ otherwise. } \end{cases}
\end{equation*}

Let $K_{0}=\GL(2, \mathfrak{o})$ be the maximal compact subgroup in $G$, and for $\lambda\geq 1$, let $K_{\lambda}= 1+\varpi^{\lambda}\M(n,\mathfrak{o})$. It is known that $\{K_{\lambda}\}_{\lambda\geq 0}$ is a neighborhood base at the identity element in $G$ consisting of compact open subgroups. We can use this base to define a neighborhood base at the identity in $\widetilde{G}$. For a suitable choice of $\lambda_{0}$, it can be shown that $\widetilde{G}$ splits over $K_{\lambda}$, where $\lambda\geq \lambda_{0}$ (see Proposition 2.8 in \cite{Gel}, Theorem 2 in \cite{Kub}). Define $\kappa: K_{\lambda_{0}}\rightarrow \widetilde{G}$ as $\kappa(k)=(k,s(k))$. Let $K_{\lambda_{0}}^{*}=\kappa(K_{\lambda_{0}})$ and for $\lambda > \lambda_{0}$, $K^{*}_{\lambda}= K_{\lambda_{0}}^{*}\cap p^{-1}(K_{\lambda})$. It can be shown that $\{K^{*}_{\lambda}\}_{\lambda\geq \lambda_{0}}$ is a neighborhood base at the identity in $\widetilde{G}$. \\

\subsection{Character of an admissible representation}

Let $F$ be a non-Archimedean local field of characteristic $0$ and $G=G(F)$ be a connected reductive algebraic group defined over $F$. We let $(\pi,V)$ be an irreducible smooth complex representation of $G$. It can be shown that such representations are always admissible. For an admissible representation $(\pi,V)$, we can define a suitable notion of a character. Before we proceed further, we set up some notation and recall an important result of Harish-Chandra. \\

Throughout we let $G=G(F)$ and $(\pi,V)$ to be an irreducible smooth representation of $G$. We let $C^{\infty}_{c}(G)$ to be the space of all locally constant complex valued functions on $G$ with compact support. For $f\in C^{\infty}_{c}(G)$, we let $\pi(f): V\rightarrow V$ denote the linear operator given by
\[\pi(f)v= \int_{G}f(g)\pi(g)vdg, \quad v\in V,\]
where the integral is with respect to a Haar measure $\mu_{G}$ on $G$ which we fix throughout. If $(\pi,V)$ is an admissible representation, it can be shown that the trace of the operator $\pi(f)$ is finite for all $f\in C^{\infty}_{c}(G)$. The resulting linear functional
\[\Theta_{\pi}: C^{\infty}_{c}(G)\longrightarrow \mathbb{C}\] given by
\[\Theta_{\pi}(f)=\tr(\pi(f))\]
is called the \textit{distribution character} of $\pi$. It determines the irreducible representation $\pi$ up to equivalence, i.e., if $\Theta_{\pi_{1}}(f)=\Theta_{\pi_{2}}(f)$, $\forall f\in C^{\infty}_{c}(G)$, then $\pi_{1}\simeq \pi_{2}$. \\

We now state a theorem of Harish-Chandra which is used to define the character of the representation $\pi$. \\

Let $G_{\reg}$ be the subset of regular semi-simple elements in $G$. It can be shown that $G_{\reg}$ is an open dense subset of $G$ whose complement has measure zero. The following is a deep result of Harish-Chandra.

\begin{theorem}[Harish-Chandra]\label{regularity theorem} There exists a locally integrable complex valued function $\Theta_{\pi}$ on $G$ such that $\Theta_{\pi}|_{G_{\reg}}$ is a locally constant function on $G_{\reg}$ and satisfies
\[\Theta_{\pi}(f)=\int_{G}f(g)\Theta_{\pi}(g)dg, \quad \forall f\in C^{\infty}_{c}(G).\]
\vspace{0.2 cm}\\
Also, for $x\in G_{\reg}, y\in G$, we have $$\Theta_{\pi}(yxy^{-1})=\Theta_{\pi}(x).$$
\end{theorem}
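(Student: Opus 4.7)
The plan is to separate the theorem into its three assertions and treat them in order of increasing depth: (a) $\Theta_\pi$ is invariant under conjugation as a distribution on $G$; (b) it is represented by a locally constant function on the regular set $G_{\reg}$; and (c) this function extends to a locally integrable function on $G$ representing $\Theta_\pi$ globally.

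Step (a) is essentially formal. For $y \in G$ and $f \in C_c^\infty(G)$, set $f^y(g) = f(y^{-1} g y)$. A change of variables in the defining integral shows $\pi(f^y) = \pi(y) \pi(f) \pi(y)^{-1}$, hence $\Theta_\pi(f^y) = \Theta_\pi(f)$. Once a representing function is produced in step (c), the almost-everywhere uniqueness of such a representative together with this distributional invariance yields the pointwise identity $\Theta_\pi(y x y^{-1}) = \Theta_\pi(x)$ for $x \in G_{\reg}$.

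For step (b), I would use Harish-Chandra's descent. Near a regular semisimple $x_0$, the centralizer $T = Z_G(x_0)$ is a maximal torus, and the conjugation map $G/T \times T \to G$ sending $(\bar g, t)$ to $g t g^{-1}$ is an $F$-analytic submersion at $(\bar e, x_0)$ precisely because $x_0$ is regular. The submersion principle allows one to push test functions forward along this map, transferring $\Theta_\pi$ on a neighborhood of $x_0$ into a distribution on a neighborhood of $x_0$ in $T$. Admissibility of $\pi$, via the finite-dimensionality of $V^K$ for small compact open subgroups $K \subset G$, then forces the transferred distribution on $T$ to be given by a locally constant function; transporting back produces local constancy of $\Theta_\pi$ on $G_{\reg}$.

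Step (c) is the heart of the theorem. One must show that the locally constant function from step (b) is locally integrable on $G$ and that its integral against any $f \in C_c^\infty(G)$ equals $\Theta_\pi(f)$. Harish-Chandra's approach is a germ expansion: near each semisimple $s \in G$ one descends to the centralizer $Z_G(s)$ and expresses $\Theta_\pi$ as a finite linear combination of normalized nilpotent orbital integrals on $Z_G(s)$, whose appropriate finiteness is controlled by Howe's conjecture, established in the $p$-adic setting by Clozel. The main obstacle of the whole proof is precisely this control of the character near the singular locus, particularly near unipotent classes: while local constancy on $G_{\reg}$ follows from relatively soft submersion arguments, integrability across the singular boundary requires the full machinery of orbital integrals and germ expansions, and would in practice be cited as a black box rather than re-proved in a paper such as this one.
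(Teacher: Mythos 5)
The paper does not prove this statement at all: it is quoted as a known deep theorem of Harish-Chandra, with the proof delegated to the reference \cite{Har} (and, for the metaplectic analogue actually used later, to \cite{Wen} and \cite{KazPat[2]}). Your proposal is therefore not comparable to an in-paper argument; what it does is sketch, correctly, the structure of the cited proof. The three-step division is the right one: (a) distributional conjugation invariance is indeed the formal change-of-variables computation $\pi(f^y)=\pi(y)\pi(f)\pi(y)^{-1}$, and combining it with the representing function and local constancy on the open dense conjugation-stable set $G_{\reg}$ does give the pointwise identity $\Theta_{\pi}(yxy^{-1})=\Theta_{\pi}(x)$; (b) local constancy on $G_{\reg}$ via Harish-Chandra's submersion principle applied to $(\bar g,t)\mapsto gtg^{-1}$ near a regular semisimple point, with admissibility (finite-dimensionality of $V^{K}$) forcing the descended distribution on the torus to be a locally constant function, is the standard argument; and (c) you correctly identify local integrability across the singular set as the deep part, resting on germ expansions and the finiteness results of Howe/Clozel type, and you are right that no paper at this level would re-prove it. The only caveat is that your text is an outline with the analytic core explicitly black-boxed, so it should be read as an informed summary of \cite{Har} rather than a proof; since the paper itself treats the entire theorem as a black box, this is an acceptable -- in fact more informative -- treatment, and it contains no step that would fail.
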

\vspace{0.2 cm}

We refer the reader to \cite{Har} for a proof of the above result. The locally constant function $\Theta_{\pi}$ on $G_{\reg}$ in the above theorem is called the \textit{character} of $\pi$. \\

Let $\widetilde{G}$ be a locally compact topological central extension of $G$ by $\mu_{n}$. Let $\xi: \mu_{n}\rightarrow \mathbb{C}^{\times}$ be any injective character of $\mu_{n}$. Let $(\pi,V)$ be a representation of $\widetilde{G}$. $\pi$ is called a \textit{genuine} representation, if for $\epsilon\in \mu_{n}$, $g\in \widetilde{G}$, we have
\[\pi(\epsilon g)=\xi(\epsilon)\pi(g).\]

 The above result of Harish-Chandra also holds in this setting. To be more precise, it can be shown that the distribution character of an irreducible admissible genuine representation $\pi$ of $\widetilde{G}$ is represented by a locally integrable function $\Theta_{\pi}$ on $\widetilde{G}$ which is locally constant on $\widetilde{G}_{\reg}=p^{-1}(G_{\reg})$ (here $p: \widetilde{G}\rightarrow G$ is the projection) and satisfies

\[\Theta_{\pi}(y^{-1}xy)=\Theta_{\pi}(x), \textrm{ for } x\in \widetilde{G}_{\reg}, y\in \widetilde{G}.\]
\vspace{0.1 cm}

We refer the reader to Theorem 4.3.2 and Corollary 4.3.3 in \cite{Wen} where results about character theory for metaplectic groups are discussed. See also Theorem I.5.1 in \cite{KazPat[2]} where the result is only mentioned. \\

Before we proceed further, we set up some notation and record a few properties of the character. Throughout we write $\mu=\mu_{\widetilde{G}}$ for the Haar measure on $\widetilde{G}$. For $K$ a compact open subgroup of $\widetilde{G}$, we let $e_K\in C_{c}^{\infty}(\widetilde{G})$ to be the  function 
\[e_K(g) = \begin{cases}
	\mu(K)^{-1}, & g \in K \\
	0, & g \notin K
\end{cases}.\] 

For $g \in \widetilde{G}$ and $f \in C_{c}^{\infty}(\widetilde{G})$, the natural action of $\widetilde{G}$ on $C_{c}^{\infty}(\widetilde{G})$ is denoted as $g.f$ where $$(g \cdot f)(x) := f(g^{-1}x).$$

\begin{lemma}\label{central-action}
		Let $(\pi, V)$ be an irreducible admissible genuine representation of $\widetilde{G}.$ Let $z$ be an element in the center of $\widetilde{G}$ and $\omega_\pi$ be the central character of $\pi$. Then, the following statements hold: 
	      \begin{enumerate}
		\item[\emph{1)}] $\pi(z \cdot f) = \omega_\pi(z) \pi(f),$ for all  $f \in C_{c}^{\infty}(\widetilde{G}).$ 
		\smallskip
		
		\item[\emph{2)}] $\Theta_{\pi}(z g) = \omega_\pi(z) \Theta_{\pi}( g),$ for all $g \in \widetilde{G}_{\reg}.$
 
	\end{enumerate}
\end{lemma}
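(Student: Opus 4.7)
The plan is to handle (1) by a direct change of variables in the defining integral for $\pi(f)$, and then to deduce (2) from (1) by comparing the two integral representations of $\Theta_{\pi}(z\cdot f)$ and invoking the local constancy of $\Theta_{\pi}$ on $\widetilde{G}_{\reg}$.

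For part (1), I would compute
\[
\pi(z\cdot f)=\int_{\widetilde{G}}(z\cdot f)(x)\,\pi(x)\,d\mu(x)=\int_{\widetilde{G}}f(z^{-1}x)\,\pi(x)\,d\mu(x),
\]
and then substitute $y=z^{-1}x$. Since $z$ is central, left translation by $z$ is a homeomorphism of $\widetilde{G}$ preserving $\mu$, so the integral becomes
\[
\int_{\widetilde{G}}f(y)\,\pi(zy)\,d\mu(y)=\pi(z)\int_{\widetilde{G}}f(y)\,\pi(y)\,d\mu(y)=\omega_{\pi}(z)\,\pi(f),
\]
where the last equality uses that $\pi(z)=\omega_{\pi}(z)\,\mathrm{id}_{V}$ by Schur's lemma (admissibility plus irreducibility).

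For part (2), I would compute $\Theta_{\pi}(z\cdot f)$ in two ways. On one hand, taking traces in (1) gives
\[
\Theta_{\pi}(z\cdot f)=\tr\bigl(\pi(z\cdot f)\bigr)=\omega_{\pi}(z)\,\Theta_{\pi}(f)=\omega_{\pi}(z)\int_{\widetilde{G}}f(g)\,\Theta_{\pi}(g)\,d\mu(g),
\]
using the metaplectic analogue of Theorem~\ref{regularity theorem}. On the other hand, applying the same integral representation to the function $z\cdot f$ and substituting $h=z^{-1}g$ yields
\[
\Theta_{\pi}(z\cdot f)=\int_{\widetilde{G}}f(z^{-1}g)\,\Theta_{\pi}(g)\,d\mu(g)=\int_{\widetilde{G}}f(h)\,\Theta_{\pi}(zh)\,d\mu(h).
\]
Equating these for every $f\in C_{c}^{\infty}(\widetilde{G})$ shows that, as locally integrable functions of $h$,
\[
\Theta_{\pi}(zh)=\omega_{\pi}(z)\,\Theta_{\pi}(h)\qquad\text{almost everywhere.}
\]

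The remaining issue, which I expect to be the main (mild) obstacle, is to upgrade this almost-everywhere identity to a pointwise identity on $\widetilde{G}_{\reg}$. For this I would observe that $p(z)$ lies in the center of $G$ (since $p$ is a surjective homomorphism and $z$ is central), so multiplication by $p(z)$ preserves regular semisimple elements; consequently the map $h\mapsto zh$ is a homeomorphism of $\widetilde{G}_{\reg}$ onto itself. Both sides of the putative identity are therefore locally constant functions of $h\in\widetilde{G}_{\reg}$, and two locally constant functions on $\widetilde{G}_{\reg}$ that agree almost everywhere agree everywhere there, which yields (2).
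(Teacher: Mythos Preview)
Your proof is correct and follows essentially the same approach as the paper's: part~(1) by the change of variable $x\mapsto zx$ and Schur's lemma, and part~(2) by computing $\Theta_{\pi}(z\cdot f)$ in two ways and equating the resulting integrals. The only difference is that you are more careful than the paper in justifying the passage from the almost-everywhere identity to the pointwise identity on $\widetilde{G}_{\reg}$, using that $h\mapsto zh$ preserves $\widetilde{G}_{\reg}$ and that both sides are locally constant there; the paper simply asserts this step.
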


\begin{proof}
	Let $f \in C_{c}^{\infty}(\widetilde{G})$. We have, 
	\begin{equation}\label{action-z-g}
		\pi(z \cdot f) = \int_{\widetilde{G}} (z \cdot f)(g) \pi(g) dg = \int_{\widetilde{G}} f(z^{-1}g) \pi(g) dg.
	\end{equation} 
By applying the change of variable $g \mapsto z g$ in \eqref{action-z-g} and using $\pi(z) = \omega_\pi(z) 1_{V},$ we get
$$\pi(z \cdot f) =  \omega_\pi(z)  \displaystyle\int_{\widetilde{G}} f(g) \pi(g) dg= \omega_\pi(z) \pi(f).$$ This proves 1). For $2)$, using the definition of the distribution character $\Theta_{\pi}$ we have,
\begin{equation*}
	\Theta_{\pi}(z \cdot f) = \tr(\pi(z \cdot f)) = \omega_\pi(z) \tr(\pi(f)) = \omega_\pi(z) \Theta_{\pi}(f).	
\end{equation*}

\noindent Thus we get 
\begin{align*}
\Theta_{\pi}(z \cdot f) &= \omega_\pi(z) \Theta_{\pi}( f) \\
                        &=  \displaystyle\int_{\widetilde{G}}  f(g) \omega_\pi(z) \Theta_{\pi}(g) dg \\
                        &= \int_{\widetilde{G}} (z \cdot f)(g) \Theta_{\pi}(g) dg  \\
	&= \int_{\widetilde{G}} f(z^{-1}g) \Theta_{\pi}(g) dg \\
	&= \int_{\widetilde{G}} f(g) \Theta_{\pi}(zg) dg.
\end{align*} 
It follows that $$\Theta_{\pi}(z g) = \omega_\pi(z) \Theta_{\pi}( g).$$

\end{proof}

\begin{lemma}\label{distribution character-non-zero} Let $(\pi, V)$ be an irreducible admissible genuine representation of $\widetilde{G}.$ Then $$\Theta_{\pi}(f) \neq 0$$
for some $f\in C_{c}^{\infty}(\widetilde{G})$. 
\end{lemma}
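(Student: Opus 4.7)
The plan is to produce an explicit test function $f$ as the normalized characteristic function of a suitably small compact open subgroup and show that $\pi(f)$ acts as a nonzero projection, whose trace is therefore a positive integer.

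First I would use the smoothness of $\pi$ together with the neighborhood base $\{K_\lambda^*\}_{\lambda \geq \lambda_0}$ at the identity in $\widetilde{G}$ described in the preliminaries. Pick any nonzero $v \in V$. Its stabilizer in $\widetilde{G}$ is open, so it contains $K_\lambda^*$ for all sufficiently large $\lambda$. The essential feature of this particular choice of base is that $K_\lambda^*$ is the image of the splitting $\kappa$ restricted to $K_\lambda$, so $K_\lambda^* \cap \mu_n = \{1\}$. This is the step that accommodates genuineness: had one chosen a compact open subgroup $K \supset \mu_n$, the integral $\int_{\mu_n} \xi(\epsilon)\, d\epsilon$ would vanish (since $\xi$ is a nontrivial character of $\mu_n$), forcing $\pi(e_K) = 0$ and trivializing the trace.

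Next I would take $f = e_{K_\lambda^*}$ for such a $\lambda$ and show that $\pi(f)$ is a projection onto $V^{K_\lambda^*}$. The computation is standard: for $w \in V^{K_\lambda^*}$, the integrand is the constant $w$, so $\pi(f)w = w$; and for arbitrary $w \in V$, left-invariance of Haar measure on $K_\lambda^*$ gives $\pi(k)\pi(f)w = \pi(f)w$ for every $k \in K_\lambda^*$, so the image of $\pi(f)$ lies in $V^{K_\lambda^*}$. Admissibility of $\pi$ ensures $\dim V^{K_\lambda^*} < \infty$, so the trace is well-defined and equal to $\dim V^{K_\lambda^*}$.

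Finally, since $v \in V^{K_\lambda^*}$ is nonzero, we conclude
\[
\Theta_\pi(e_{K_\lambda^*}) \;=\; \tr \pi(e_{K_\lambda^*}) \;=\; \dim V^{K_\lambda^*} \;\geq\; 1,
\]
which proves the lemma. The only genuinely delicate point is the one flagged above, namely the compatibility of the choice of compact open subgroup with the genuineness condition; everything else is a direct application of admissibility and the standard averaging argument.
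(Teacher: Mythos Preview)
Your proof is correct and follows essentially the same approach as the paper: choose a nonzero vector, use smoothness to find a compact open subgroup $K$ fixing it, and observe that $\pi(e_K)$ is the projection onto the finite-dimensional space $V^K$, so its trace is $\dim V^K \geq 1$. Your explicit discussion of why one must take $K = K_\lambda^*$ (so that $K \cap \mu_n = \{1\}$) to avoid the genuineness condition forcing $V^K = 0$ is a point the paper leaves implicit, but it does not change the underlying argument.
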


\begin{proof} Let $0\neq u \in V$. Since $\pi$ is a smooth representation, there exists a compact open subgroup $K$ of $\widetilde{G}$ such that $u\in V^{K}$, where $V^{K} = \{v \in V \mid \pi(k)v = v, \forall k \in K\}$ is the subspace of $K$-fixed vectors in $V$. Let $f=e_{K}\in C_{c}^{\infty}(\widetilde{G})$. It is easy to see that $\pi(e_{K})\neq 0$ and defines a projection of $V$ onto $V^{K}$. Indeed, 
\[
	\pi(e_K)u = \int_{\widetilde{G}} e_K(g) \pi(g)u dg = \mu(K)^{-1} \int_{K} \pi(g)u dg = \mu(K)^{-1} \int_{K} u dg = u.
\]
It follows that $\pi(e_{K})$ fixes $V^{K}$ and $\pi(e_K) \neq 0.$ Let $v \in V$ such that $v \notin V^K$. Take $v_1 = \pi(e_K)v$. For $k \in K,$ we have
\begin{align*}
    \pi(k) v_1 &= \int_{\widetilde{G}} e_K(g)  \pi(kg)v dg \\
               &= \mu(K)^{-1} \int_{K}   \pi(kg)v dg \\
               &= \mu(K)^{-1} \displaystyle\int_{K}   \pi(g)v dg ~~~~~~~~~ \mbox{ ($g \mapsto k^{-1}g$)} \nonumber \\
               & = \displaystyle\int_{\widetilde{G}} e_K(g)  \pi(g)v dg \\
               & =  v_1.
\end{align*}
Hence $\pi(e_K)$ defines a projection of $V$ onto $V^K$. Thus we have $$\Theta_{\pi}(e_K) = \tr(\pi(e_K)) = \dim_{\mathbb{C}}(V^K)\neq 0.$$ Since $\pi$ is admissible, we have that $\dim_{\mathbb{C}}(V^K)$ is finite and the result follows. 
\end{proof}

\begin{lemma}\label{character-non-zero} Let $(\pi, V)$ be an irreducible admissible genuine representation of $\widetilde{G}.$ There exists $g_0 \in G_{\reg}$ such that $$\Theta_{\pi}((g_0, 1)) \neq 0.$$
\end{lemma}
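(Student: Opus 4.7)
The plan is to combine Lemma~\ref{distribution character-non-zero} with the metaplectic version of Harish--Chandra's regularity theorem (stated earlier in the excerpt), and then to use the central character to normalize the $\mu_{n}$-component of the chosen point to $1$.

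First I would pick $f\in C_{c}^{\infty}(\widetilde{G})$ with $\Theta_{\pi}(f)\neq 0$, which exists by Lemma~\ref{distribution character-non-zero}. By the regularity theorem, the distribution $\Theta_{\pi}$ is represented by a locally integrable function on $\widetilde{G}$ (also denoted $\Theta_{\pi}$) which is locally constant on $\widetilde{G}_{\reg}=p^{-1}(G_{\reg})$. Since the complement of $G_{\reg}$ has measure zero in $G$, the complement of $\widetilde{G}_{\reg}$ has measure zero in $\widetilde{G}$, and therefore
\[
\Theta_{\pi}(f)=\int_{\widetilde{G}_{\reg}}f(g)\,\Theta_{\pi}(g)\,dg.
\]
If $\Theta_{\pi}$ vanished identically on $\widetilde{G}_{\reg}$, this integral would be zero, contradicting $\Theta_{\pi}(f)\neq 0$. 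Hence there exists $\widetilde{g}\in\widetilde{G}_{\reg}$ with $\Theta_{\pi}(\widetilde{g})\neq 0$.

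Write $\widetilde{g}=(g_{0},\eta)$ with $g_{0}=p(\widetilde{g})\in G_{\reg}$ and $\eta\in\mu_{n}$. A direct inspection of the Kubota cocycle gives $c(1,g_{0})=1$, so $(1,\eta)\cdot(g_{0},1)=(g_{0},\eta)$. The element $(1,\eta)$ lies in the center of $\widetilde{G}$, and since $\pi$ is genuine, $\omega_{\pi}((1,\eta))=\xi(\eta)\neq 0$. Applying Lemma~\ref{central-action}\,(2) to the central element $(1,\eta)$ and the regular element $(g_{0},1)$ yields
\[
\Theta_{\pi}((g_{0},\eta))=\xi(\eta)\,\Theta_{\pi}((g_{0},1)),
\]
and therefore $\Theta_{\pi}((g_{0},1))\neq 0$, which is exactly the desired conclusion.

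The only nontrivial point is that the non-vanishing guaranteed by the previous lemma and the regularity theorem a priori only produces \emph{some} point of $\widetilde{G}_{\reg}$, not necessarily one of the form $(g_{0},1)$; the central character identity from Lemma~\ref{central-action} is precisely what allows us to absorb the $\mu_{n}$-component and reduce to the natural lift. Everything else is a direct invocation of results already set up in Section~\ref{preliminaries}.
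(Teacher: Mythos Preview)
Your proof is correct and is essentially the same argument as the paper's, just written in direct form rather than by contradiction: the paper assumes $\Theta_{\pi}((g,1))=0$ for all $g\in G_{\reg}$, uses the central character relation to deduce $\Theta_{\pi}\equiv 0$ on $\widetilde{G}_{\reg}$, and then invokes the regularity theorem and Lemma~\ref{distribution character-non-zero} for a contradiction. The ingredients---Lemma~\ref{distribution character-non-zero}, the regularity theorem, and Lemma~\ref{central-action}(2)---are identical.
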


\begin{proof} Suppose that $\Theta_{\pi}((g, 1)) = 0$ for all $g \in G_{\reg}$. For $x=(g,\eta)\in \widetilde{G}_{\reg},$ we have 
\[\Theta_{\pi}(x)=\Theta_{\pi}((g,\eta))= \omega_{\pi}(\eta)\Theta_{\pi}((g,1))=0. \]
Using Theorem~\ref{regularity theorem}, we have $\Theta_{\pi}(f)= 0$ for every $f \in C_{c}^{\infty}(\widetilde{G})$. This is not possible.
\end{proof}

\subsection{Some results about lifts of an automorphism of $G$ to $\widetilde{G}$}\label{results we need}

We recall a few results from \cite{Kab} which we need in proving our main result. Let $\widetilde{G}$ be a central extension of a group $G$ by an abelian group $A$. Let $p: \widetilde{G}\rightarrow G$ be the projection map, $s: G\rightarrow \widetilde{G}$ be a section of $p$ and $\tau$ be the 2-cocycle representing the class of this central extension in $\coh ^{2}(G,A)$ with respect to the section $s$. If $f: G\rightarrow G$ is an automorphism (resp. anti-automorphism) of $G$, then a lift of $f$ is an automorphism (resp. anti-automorphism) $\tilde{f}: \widetilde{G}\rightarrow \widetilde{G}$ such that \[p(\tilde{f}(g))=f(p(g)), \forall g\in \widetilde{G}.\]
Let $\mathcal{L}(f)$ denote the set of all lifts of $f$. The group $\Aut(G)$ acts on $\coh ^{2}(G,A)$ by $f[\sigma]= [\sigma\circ (f^{-1}\times f^{-1})]$ for any 2-cocycle $\sigma$.

\begin{proposition}\label{description of the set of liftings} The set $\mathcal{L}(f)$ is precisely described in terms of this action by the following:
\begin{enumerate}
\item[1)] The set $\mathcal{L}(f)$ is non-empty if and only if $f[\tau]=[\tau]$.
\item[2)] If $\mathcal{L}(f)$ is non-empty, then $\mathcal{L}(f)$ is a principal homogeneous space for the group $\Hom(G,A)$ under the action
\[(\phi.\tilde{f})(g)= \phi(p(g))\tilde{f}(g).\]
\end{enumerate}
\end{proposition}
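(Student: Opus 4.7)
The plan is to translate the existence of a lift into a cohomological condition on the $2$-cocycle $\tau$, and then to produce a free transitive action of $\Hom(G,A)$ on the set of lifts by comparing any two of them.

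First, using the section $s$, every element of $\widetilde{G}$ can be written uniquely as $s(g) a$ with $g \in G$ and $a \in A$. Assuming the standard convention that a lift restricts to the identity on the kernel $A$, a candidate lift $\tilde{f}$ of $f$ is determined by a function $\phi : G \to A$ via $\tilde{f}(s(g)) = s(f(g))\phi(g)$. Expanding $\tilde{f}(s(g_1)s(g_2)) = \tilde{f}(s(g_1))\tilde{f}(s(g_2))$ and using centrality of $A$ to move factors past each other, the requirement that $\tilde{f}$ be a homomorphism becomes the cocycle identity
\[
\tau(g_1, g_2)\cdot \tau(f(g_1), f(g_2))^{-1} \;=\; \phi(g_1)\phi(g_2)\phi(g_1 g_2)^{-1}.
\]
Thus $\tau$ and $(g_1, g_2)\mapsto \tau(f(g_1), f(g_2))$ differ by the coboundary of $\phi$, and such a $\phi$ exists exactly when these two cocycles represent the same class in $\coh^{2}(G,A)$. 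Substituting $g_i\mapsto f^{-1}(g_i)$ rewrites this equality of classes as $f[\tau] = [\tau]$ in the action defined in the statement, which settles part $(1)$. The anti-automorphism case is analogous, with the order of factors on the right-hand side of the cocycle condition reversed.

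For part $(2)$, assuming $\mathcal{L}(f)$ is nonempty, I would fix a basepoint $\tilde{f}_{0}\in \mathcal{L}(f)$ and, for any other lift $\tilde{f}$, consider the map $\psi(x) := \tilde{f}(x)\tilde{f}_{0}(x)^{-1}$. Since both $\tilde{f}$ and $\tilde{f}_{0}$ project to $f\circ p$, the image of $\psi$ lies in $\Ker(p) = A$; a short calculation using centrality of $A$ shows that $\psi$ is a homomorphism $\widetilde{G}\to A$. Because both lifts are the identity on $A$, the map $\psi$ is trivial on $A$ and descends to a homomorphism $\phi\in \Hom(G,A)$ with $\tilde{f}(g) = \phi(p(g))\tilde{f}_{0}(g)$. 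Checking that $\phi\cdot \tilde{f}_{0}$ is indeed a lift of $f$ for any $\phi\in \Hom(G,A)$, and that the assignment $(\phi,\tilde{f})\mapsto \phi\cdot \tilde{f}$ defines a free and transitive action, are then straightforward computations using centrality of $A$ together with the surjectivity of $p$.

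In my view the main obstacle is purely bookkeeping: one must keep track of the precise direction in which $f$ pulls back cocycles, so that the cocycle identity produced in part $(1)$ matches the $\Aut(G)$-action on $\coh^{2}(G,A)$ defined in the statement, and one must fix and consistently use the convention that a lift restricts to the identity on $A$. Beyond these conventions, no property specific to $\widetilde{\GL}(2,F)$, the Kubota cocycle, or $\mu_{n}$ enters the argument — only that $\widetilde{G}$ is a central extension of $G$ by the abelian group $A$.
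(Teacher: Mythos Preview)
The paper does not actually prove this proposition: it is stated there as a result recalled from \cite{Kab} (see the sentence ``We recall a few results from \cite{Kab}\ldots'' preceding the statement), with no argument supplied. So there is no in-paper proof to compare against.

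Your proposed argument is the standard one and is correct. Writing $\widetilde{G}\ni x = s(g)a$ uniquely, parametrizing a candidate lift by $\phi:G\to A$ via $\tilde{f}(s(g))=s(f(g))\phi(g)$, and expanding the homomorphism condition yields exactly the coboundary relation you wrote, which is equivalent to $f[\tau]=[\tau]$ after the substitution $g_i\mapsto f^{-1}(g_i)$. For part (2), forming $\psi(x)=\tilde{f}(x)\tilde{f}_0(x)^{-1}\in A$ and observing it descends to $\Hom(G,A)$ is precisely the way transitivity is established, and freeness follows from surjectivity of $p$. Your caveat about the convention that a lift acts as the identity on $A$ is well taken: the paper's definition only demands $p\circ\tilde{f}=f\circ p$, which a priori allows $\tilde{f}|_A$ to be a nontrivial automorphism of $A$; the formulation of the proposition (in particular, that $\Hom(G,A)$ rather than something involving $\Aut(A)$ acts simply transitively) implicitly uses this convention, and it is the one adopted in \cite{Kab}.
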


We also need the following result (see Corollary 1 in \cite{Kab} for a proof) which discusses the continuity properties of the lift in the case when $G=\GL(m,F)$. We state it below for clarity.

\begin{proposition}\label{continuity of lift} Let $F$ be a non-Archimedean local field and suppose that the group of $n^{th}$ roots of unity in $F$ has order $n$. Let $\langle \, , \, \rangle$ be the $n^{th}$ order Hilbert symbol on $F$ and $\widetilde{\GL}(m)$ the corresponding metaplectic group. Then the lift of any topological automorphism of $\GL(m)$ to $\widetilde{\GL}(m)$ is also a topological automorphism.
\end{proposition}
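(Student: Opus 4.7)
The plan is to verify continuity of $\tilde{f}$ at the identity of $\widetilde{\GL}(m)$; the same argument applied to $\tilde{f}^{-1}$, which is a lift of the topological automorphism $f^{-1}$, then yields continuity of the inverse and hence that $\tilde{f}$ is a topological automorphism. Using the neighborhood base $\{K^{*}_{\lambda}\}_{\lambda \geq \lambda_0}$ of compact open subgroups at the identity in $\widetilde{\GL}(m)$, the task reduces to showing that for each $\lambda \geq \lambda_0$ there exists $\mu \geq \lambda_0$ with $\tilde{f}(K_{\mu}^{*}) \subseteq K_{\lambda}^{*}$.

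The central device is to compare two splittings of the projection $p$ over a small compact open subgroup. Since $K_{\lambda_0}^{*} = \kappa(K_{\lambda_0})$ meets the kernel $\mu_n$ trivially, the subgroup $\tilde{f}(K_{\lambda_0}^{*})$ projects isomorphically onto $f(K_{\lambda_0})$, so the assignment $\kappa'(g) := \tilde{f}(\kappa(f^{-1}(g)))$ defines a group-theoretic splitting of $p$ over $f(K_{\lambda_0})$. Setting $L := f(K_{\lambda_0}) \cap K_{\lambda_0}$, both $\kappa|_{L}$ and $\kappa'|_{L}$ split $p$ over the compact open subgroup $L$ of $\GL(m)$, so their ratio defines a group homomorphism $\chi : L \to \mu_{n}$, which is not a priori continuous.

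The crux is to show that $\chi$ is trivial on $K_\nu$ for $\nu$ sufficiently large. For this I would invoke the exponential-logarithm correspondence on $\GL(m)$: for $\nu$ large enough, exponentiation provides a homeomorphism $\varpi^{\nu}\M(m,\mathfrak{o}) \to K_\nu$ under which the $n$-th power map on $K_\nu$ corresponds to multiplication by $n$ on the Lie algebra, so that $K_\nu^{n} = K_{\nu + ord(n)}$. Since $\mu_n$ has exponent $n$, one computes
\[ \chi(K_{\nu+ord(n)}) \;=\; \chi(K_\nu^{n}) \;=\; \chi(K_\nu)^{n} \;\subseteq\; \mu_n^{n} \;=\; \{1\}, \]
so $\chi$ vanishes on $K_{\nu+ord(n)}$. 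Choosing $\nu$ also large enough that $K_\nu \subseteq L$, which is possible by continuity of $f^{-1}$, one obtains a small neighborhood of the identity on which $\kappa$ and $\kappa'$ coincide. Continuity of $\tilde{f}$ is then immediate: by continuity of $f$, pick $\mu$ with $f(K_\mu) \subseteq K_{\nu + ord(n)} \cap K_\lambda$; then for every $h \in K_\mu$ one has $\tilde{f}(\kappa(h)) = \kappa'(f(h)) = \kappa(f(h)) \in \kappa(K_\lambda) = K_{\lambda}^{*}$, giving $\tilde{f}(K_{\mu}^{*}) \subseteq K_{\lambda}^{*}$.

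The main obstacle is precisely the lack of a priori continuity of the homomorphism $\chi$; bare group theory alone cannot conclude its triviality on a neighborhood of the identity. This is what the analytic input sidesteps: on a deep enough principal congruence subgroup of $\GL(m)$, every element is an $n$-th power by the exponential parametrization, forcing any abstract homomorphism into the $n$-torsion group $\mu_n$ to be trivial regardless of continuity.
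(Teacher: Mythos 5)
The paper does not actually prove this proposition: it is quoted from Kable and the text simply points to Corollary 1 of \cite{Kab}, so there is no internal argument to compare yours against. On its own terms your proof is correct and self-contained. The reduction to continuity at the identity is legitimate (an abstract homomorphism of topological groups continuous at $1$ is continuous, and applying the argument to $\tilde{f}^{-1}$, which is a lift of $f^{-1}$, gives the inverse), and the central mechanism is sound: $\kappa'(g)=\tilde{f}(\kappa(f^{-1}(g)))$ is a second splitting over $f(K_{\lambda_0})$, the ratio $\chi=\kappa'\kappa^{-1}$ is a genuine homomorphism $L\to\mu_n$ because $\mu_n$ is central, and since every element of $K_{\nu+ord(n)}$ is an $n$-th power of an element of $K_\nu$ (via $\exp/\log$ for $\nu$ large), any abstract homomorphism of $L$ into the $n$-torsion group $\mu_n$ must vanish there; no continuity of $\chi$ or of $\tilde f$ is presupposed, which is exactly the point. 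The final bookkeeping $\tilde{f}(K^*_\mu)=\tilde{f}(\kappa(K_\mu))=\kappa'(f(K_\mu))\subseteq\kappa(K_\lambda)=K^*_\lambda$ is also correct, using $K^*_\lambda=\kappa(K_\lambda)$ for $\lambda\geq\lambda_0$.

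Two small caveats. First, the proposition as stated allows any non-Archimedean local field, including positive characteristic, where the exponential parametrization is unavailable; but the hypothesis $|\mu_n|=n$ forces $n$ to be prime to the residue characteristic there, and then the surjectivity of the $n$-th power map on deep congruence subgroups follows from a Hensel/binomial-series argument instead, so your scheme survives (and in the present paper $\ch F=0$ is assumed anyway, so $\exp/\log$ with the shift by $ord(n)$ is fine). Second, it is worth noting that an alternative (and arguably quicker) finish is available once one continuous lift is known: by Proposition~\ref{description of the set of liftings} any two lifts differ by an abstract homomorphism $G\to\mu_n$, and every such homomorphism factors through $\det$ (as $\SL(m,F)$ is perfect) and is automatically continuous; your splitting-comparison argument is essentially what supplies the continuity of one lift, so the two routes are close in substance.
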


\section{Lift of the standard involution}\label{sigma and its properties}

Let $G=\GL(2,F)$ and $\widetilde{G}=\widetilde{\GL}(2,F)$ be the $n$-fold metaplectic cover of $G$. Let $\tau$ be the standard involution on $G$. In this section, we define a lift $\sigma$ of 
$\tau$ and show that it is an involutive anti-automorphism of $\widetilde{G}$. We also discuss an important property of this lift (see Theorem~\ref{conjugacy property of sigma}) which is crucial in proving the main result of this paper. \\

For $\lambda\in F^{\times}$ and $g=\begin{bmatrix} a & b \\ c & d\end{bmatrix}\in G$, we let $u(\lambda)=\begin{bmatrix*}[r] \lambda & 0 \\ 0 & -\lambda\end{bmatrix*}$ and $\Delta(g)=\det(g)$. It is easy to see that
\[\tau(g)= w_{0}g^{\top}w_{0}= u(\Delta(g))g^{-1}u(1).\]

For $\epsilon\in \mu_{n}$, we denote the element $(I_{2},\epsilon)\in \widetilde{G}$ as $\epsilon$. Let $\tilde{u}(\lambda)=(u(\lambda),1)$ and $\tilde{z}(\lambda)= (\lambda I_{2}, 1)$ where $I_{2}$ is the $2\times 2$ identity matrix. We extend $\Delta$ to $\widetilde{G}$ by $\Delta((g,\epsilon))=\Delta(g)$.
For $h=(g,\epsilon)\in \widetilde{G}$ define
\begin{equation}\label{definition of sigma}\sigma(h)=\tilde{u}(\Delta(h))h^{-1}\tilde{u}(1).\end{equation}

Before discussing the properties of $\sigma$, we first prove a proposition which is used to study the properties of 
$\sigma$.  

\begin{proposition}\label{properties to check sigma is an involution} Let $\lambda, \lambda_1, \lambda_2 \in F^{\times}$, and $h\in \widetilde{G}$. Then
\begin{enumerate}[label=\emph{(\roman*)}]
\item $h\tilde{z}(\lambda)= \langle \Delta(h), \lambda\rangle\; \tilde{z}(\lambda)h$.
\vspace{0.1 cm}
\item $\tilde{z}(\lambda_{1})\tilde{z}(\lambda_{2})= \langle \lambda_{1}, \lambda_{2}\rangle \tilde{z}(\lambda_{1}\lambda_{2})$.
\vspace{0.1 cm}
\item $\tilde{u}(\lambda_{1})\tilde{u}(\lambda_{2})= \langle \lambda_{1}, -\lambda_{2}\rangle \tilde{z}(\lambda_{1}\lambda_{2})$.
\vspace{0.1 cm}
\item $\tilde{u}(\lambda)^{-1}= \tilde{u}(\lambda^{-1})$.
\vspace{0.1 cm}
\item $\tilde{u}(\lambda_{1})\tilde{z}(\lambda_{2})= \langle\lambda_{1}, \lambda_{2}\rangle\tilde{u}(\lambda_{1}\lambda_{2})$.
\end{enumerate}
\end{proposition}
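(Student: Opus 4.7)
The plan is to verify all five identities by direct computation in the central extension, using the explicit Kazhdan--Patterson cocycle
\[c(g_1,g_2)=\bigl\langle \tfrac{X(g_1g_2)}{X(g_1)},\tfrac{X(g_1g_2)}{X(g_2)\Delta(g_1)}\bigr\rangle,\]
together with the multiplication rule $(g_1,\epsilon_1)(g_2,\epsilon_2)=(g_1g_2,\epsilon_1\epsilon_2\,c(g_1,g_2))$ in $\widetilde{G}$. The two matrices under consideration, $\lambda I_2$ and $u(\lambda)$, both have vanishing $(2,1)$-entry, so in every cocycle evaluation where $\lambda I_2$ or $u(\lambda)$ appears in either slot, the function $X$ returns the $(2,2)$-entry ($\lambda$ and $-\lambda$, respectively). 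After writing down $c$ explicitly, the verification reduces to simple manipulations of the Hilbert symbol via Propositions~\ref{fundamental properties of the Hilbert symbol} and~\ref{properties of the Hilbert symbol}.

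I would dispatch (ii), (iii), (v) first, since in each case both factors are upper-triangular with zero $(2,1)$-entry so the products are again of this form, and a single substitution into $c$ yields (after applying $\langle a,b^{-1}\rangle = \langle b,a\rangle$) the claimed symbols: $c(\lambda_1 I_2,\lambda_2 I_2)=\langle \lambda_1,\lambda_2\rangle$ for (ii); $c(u(\lambda_1),u(\lambda_2))=\langle \lambda_1,-\lambda_2\rangle$ for (iii), using $\Delta(u(\lambda_1))=-\lambda_1^2$; and $c(u(\lambda_1),\lambda_2 I_2)=\langle \lambda_1,\lambda_2\rangle$ for (v). Then (iv) drops out of (iii) by setting $\lambda_2=\lambda^{-1}$, since
\[\langle \lambda,-\lambda^{-1}\rangle=\langle \lambda,-\lambda\rangle\langle \lambda,\lambda^{-2}\rangle=1\cdot 1=1\]
by items (v) and (ii) of Proposition~\ref{properties of the Hilbert symbol}.

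The remaining identity (i) is the most delicate, and I expect it to be the main obstacle. Writing $h=(g,\epsilon)$, both sides share the same $G$-component $\lambda g$, and the identity reduces to the cocycle relation
\[c(g,\lambda I_2)=\langle \Delta(g),\lambda\rangle\,c(\lambda I_2,g).\]
To prove this I would split on whether $g_{21}\neq 0$ (so $X(g)=g_{21}$ and $X(\lambda g)=\lambda g_{21}$) or $g_{21}=0$ (so $X(g)=g_{22}$, $\Delta(g)=g_{11}g_{22}$, and $X(\lambda g)=\lambda g_{22}$). In each case a careful bookkeeping of the entries that appear in $c$, followed by bimultiplicativity and $\langle a,b^{-1}\rangle=\langle a,b\rangle^{-1}$, produces the factor $\langle \Delta(g),\lambda\rangle$ as the discrepancy, independent of $\epsilon$ (since central $\mu_n$-factors cancel). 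The delicacy is purely bookkeeping: tracking which entries of $g$ survive into $X(g)$, $X(\lambda g)$, and $\Delta(g)$ under the case split, and then repackaging the resulting product of Hilbert symbols into the clean form $\langle \Delta(g),\lambda\rangle$.
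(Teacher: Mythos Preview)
Your proposal is correct and follows essentially the same approach as the paper: both reduce each identity to an explicit evaluation of the Kazhdan--Patterson cocycle on the relevant matrices, with the same case split on $g_{21}$ for part~(i). The only minor difference is in (iv): the paper computes $\tilde{u}(\lambda)^{-1}=(u(\lambda^{-1}),c(u(\lambda),u(\lambda^{-1}))^{-1})$ directly and checks the cocycle vanishes, whereas you deduce (iv) from (iii) by setting $\lambda_2=\lambda^{-1}$ and observing $\langle\lambda,-\lambda^{-1}\rangle=1$; your route is arguably cleaner but the content is the same.
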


\begin{proof} \hfill \\ 
\begin{enumerate}[label=(\roman*)]
\item Let $h=\left(\begin{bmatrix*}[r] a & b\\ c & d\end{bmatrix*}, \epsilon\right)$. It is easy to see that \[\tilde{z}(\lambda)h=\Bigg(\begin{bmatrix*}[r] a\lambda & b\lambda \\ c\lambda & d\lambda\end{bmatrix*}, c\left (\begin{bmatrix*}[r] \lambda & 0 \\ 0 & \lambda\end{bmatrix*}, \begin{bmatrix} a & b \\ c & d\end{bmatrix} \right )\epsilon \Bigg),\] and \[h\tilde{z}(\lambda)=\Bigg(\begin{bmatrix*}[r] a\lambda & b\lambda \\ c\lambda & d\lambda\end{bmatrix*}, c\left ( \begin{bmatrix} a & b \\ c & d\end{bmatrix}, \begin{bmatrix*}[r] \lambda & 0 \\ 0 & \lambda\end{bmatrix*} \right )\epsilon \Bigg).\]
Note that $c\left (\begin{bmatrix*}[r] \lambda & 0 \\ 0 & \lambda\end{bmatrix*}, \begin{bmatrix} a & b \\ c & d\end{bmatrix} \right )=\begin{cases} \langle \lambda, c\rangle , & c\neq 0 \\
\langle \lambda, d\rangle,  & c=0.\end{cases}$\\
Therefore
\begin{align*}
 \langle \Delta(h), \lambda\rangle c\left (\begin{bmatrix*}[r] \lambda & 0 \\ 0 & \lambda\end{bmatrix*}, \begin{bmatrix} a & b \\ c & d\end{bmatrix} \right )&=\begin{cases} \langle \Delta(h), \lambda\rangle \langle \lambda, c\rangle , & c\neq 0 \\
\langle \Delta(h), \lambda\rangle \langle \lambda, d\rangle,  & c=0\end{cases}\\
&=\begin{cases}
    \langle \lambda, \Delta(h)^{-1}c\rangle, & c\neq 0\\
    \langle \lambda, \Delta(h)^{-1}d\rangle, & c= 0
\end{cases}\\
&=c\left (\begin{bmatrix} a & b \\ c & d\end{bmatrix}, \begin{bmatrix*}[r] \lambda & 0 \\ 0 & \lambda\end{bmatrix*} \right ).
\end{align*}
The result follows. \\

\item  Since $c(z(\lambda_1), z(\lambda_2))=c(\lambda_1 I_2, \lambda_2 I_2)=\langle \lambda_1, \lambda_2\rangle$, we have \[\tilde{z}(\lambda_{1})\tilde{z}(\lambda_{2})= (\lambda_1 \lambda_2 I_2,\langle \lambda_1, \lambda_2\rangle )= \langle \lambda_{1}, \lambda_{2}\rangle \tilde{z}(\lambda_{1}\lambda_{2}).\]

\item Using \[c(u(\lambda_1),u(\lambda_2))=c\left (\begin{bmatrix*}[r] \lambda_1 & 0 \\ 0 & -\lambda_1\end{bmatrix*}, \begin{bmatrix} \lambda_2 & 0 \\ 0 & -\lambda_2 \end{bmatrix} \right )= \langle \lambda_1, -\lambda_2\rangle,\] we get \[\tilde{u}(\lambda_{1})\tilde{u}(\lambda_{2})= (\lambda_{1}\lambda_{2} I_2, \langle \lambda_1, -\lambda_2\rangle)= \langle \lambda_{1}, -\lambda_{2}\rangle \tilde{z}(\lambda_{1}\lambda_{2}).\] 

\item We have $\tilde{u}(\lambda)=(u(\lambda),1)$. Thus, 
\begin{align*}
    \tilde{u}(\lambda)^{-1}&=\big(u(\lambda)^{-1},c(u(\lambda),u(\lambda)^{-1})^{-1}\big)\\
&= \big(u(\lambda)^{-1}, c(u(\lambda),u(\lambda^{-1}))^{-1}\big)\\
    &=\big(u(\lambda^{-1}),\langle -\lambda, \lambda \rangle^{-1}\big)\\
    &= \big(u(\lambda^{-1}),1\big)\\
    &=\tilde{u}(\lambda^{-1}).\\
\end{align*}

\item It is easy to see that, 
\[c(u(\lambda_1),z(\lambda_2))=c\left (\begin{bmatrix*}[r] \lambda_1 & 0 \\ 0 & -\lambda_1\end{bmatrix*}, \begin{bmatrix} \lambda_2 & 0 \\ 0 & \lambda_2 \end{bmatrix} \right )= \langle \lambda_2, \lambda_1^{-1}\rangle=\langle \lambda_1, \lambda_2\rangle.\]
Thus we have, 
\[\tilde{u}(\lambda_{1})\tilde{z}(\lambda_{2})= \Bigg(\begin{bmatrix*}[r] \lambda_1\lambda_2 & 0 \\ 0 & -\lambda_1\lambda_2\end{bmatrix*}, \langle\lambda_{1}, \lambda_{2}\rangle\Bigg)=\langle\lambda_{1}, \lambda_{2}\rangle\tilde{u}(\lambda_{1}\lambda_{2}).\]
\end{enumerate}
\end{proof}

Let $\sigma: \widetilde{G}\rightarrow \widetilde{G}$ be defined as in \eqref{definition of sigma}. We record some properties satisfied by $\sigma$ in the proposition below. 

\begin{lemma}\label{Properties of sigma} The map $\sigma$ satisfies the following properties:
\begin{enumerate}
    \item[\emph{(1)}] $\sigma$ is an anti-automorphism.
    \vspace{0.1 cm}
    \item[\emph{(2)}] $\sigma(\epsilon)=\epsilon^{-1}$ for any $\epsilon\in \mu_{n}$.
    \vspace{0.1 cm}
    \item[\emph{(3)}] $\sigma$ is an involution.
    \vspace{0.1 cm}
    \item[\emph{(4)}] $\sigma(h^{-1})=\sigma(h)^{-1}$ for all $h\in \widetilde{G}$.
    \vspace{0.1 cm}
    \item[\emph{(5)}] $\sigma$ is a lift of $\tau$.
    \end{enumerate}
\end{lemma}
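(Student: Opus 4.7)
The plan is to verify the five properties in the order (5), (2), (1), (3), (4), relying throughout on Proposition \ref{properties to check sigma is an involution} to handle the Hilbert-symbol corrections that arise when products and inverses of $\tilde{u}(\lambda)$ and $\tilde{z}(\lambda)$ are rearranged. Property (5) is immediate: for $h=(g,\epsilon)$, applying the projection $p\colon\widetilde{G}\to G$ to $\sigma(h)=\tilde{u}(\Delta(h))h^{-1}\tilde{u}(1)$ gives $u(\Delta(g))g^{-1}u(1)=\tau(g)$, matching $\tau\circ p$. Property (2) is a direct computation: for $\epsilon\in\mu_n$ viewed as $(I_2,\epsilon)\in\widetilde{G}$, one has $\Delta(\epsilon)=1$, so $\sigma(\epsilon)=\tilde{u}(1)\epsilon^{-1}\tilde{u}(1)$; since $\epsilon^{-1}$ is central while $\tilde{u}(1)\tilde{u}(1)=\langle 1,-1\rangle\tilde{z}(1)=1_{\widetilde{G}}$ by clause (iii), this collapses to $\epsilon^{-1}$.

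For property (1), I would expand both $\sigma(h_1h_2)=\tilde{u}(\Delta(h_1)\Delta(h_2))\,h_2^{-1}h_1^{-1}\,\tilde{u}(1)$ and $\sigma(h_2)\sigma(h_1)=\tilde{u}(\Delta(h_2))\,h_2^{-1}\,\tilde{u}(1)\tilde{u}(\Delta(h_1))\,h_1^{-1}\,\tilde{u}(1)$, then apply clause (iii) to rewrite $\tilde{u}(1)\tilde{u}(\Delta(h_1))=\tilde{z}(\Delta(h_1))$, clause (i) to move $\tilde{z}(\Delta(h_1))$ past $h_2^{-1}$ at the cost of $\langle\Delta(h_2),\Delta(h_1)\rangle^{-1}$, and clause (v) to merge $\tilde{u}(\Delta(h_2))\tilde{z}(\Delta(h_1))$ into $\langle\Delta(h_2),\Delta(h_1)\rangle\,\tilde{u}(\Delta(h_1)\Delta(h_2))$. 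The two symbol corrections cancel and the two expressions coincide. Bijectivity of $\sigma$ will then follow from property (3).

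Property (3) begins with the observation that $\det u(\lambda)=-\lambda^2$, hence $\Delta(\sigma(h))=\Delta(h)$. Using clause (iv) to invert $\sigma(h)$ and applying clause (iii) twice (the corrections $\langle\Delta(h),-1\rangle$ and $\langle\Delta(h)^{-1},-1\rangle$ are mutually inverse), one reduces $\sigma^2(h)$ to $\tilde{z}(\Delta(h))\,h\,\tilde{z}(\Delta(h)^{-1})$. A final application of clauses (i) and (ii) collapses this to $\langle\Delta(h),\Delta(h)^{-1}\rangle^{2}\,h$, and this scalar is trivial because clause (v) of Proposition \ref{properties of the Hilbert symbol} gives $\langle a,a^{2}\rangle=1$, forcing $\langle a,a\rangle^{2}=1$ and hence $\langle a,a^{-1}\rangle^{2}=1$. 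Thus $\sigma^2=\mathrm{id}$.

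Finally, property (4) follows formally from (1) once $\sigma(1_{\widetilde{G}})=1_{\widetilde{G}}$ is verified, and this is the identity $\tilde{u}(1)\tilde{u}(1)=1$ already used in (2). The only real obstacle throughout is the disciplined bookkeeping of Hilbert-symbol factors; provided each manipulation is justified by the appropriate clause of Proposition \ref{properties to check sigma is an involution}, every step is routine.
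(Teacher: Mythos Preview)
Your proposal is correct and follows essentially the same approach as the paper, relying on Proposition~\ref{properties to check sigma is an involution} to manage the Hilbert-symbol bookkeeping in the same way. The only notable difference is in part (5): you argue in one line via the fact that $p$ is a group homomorphism, whereas the paper computes $\sigma((g,1))$ explicitly through cocycle calculations in the cases $c\neq 0$ and $c=0$; your argument suffices for this lemma, though the paper's explicit formula is recorded separately (Remark~\ref{sigma-h-computation}) and used in later results.
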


\begin{proof} \hfill \\
\begin{enumerate}
\item We use properties (i), (iii), and (v) from Proposition \ref{properties to check sigma is an involution}. For $h_{1}, h_{2}\in \widetilde{G}$, we have \begin{align*}
\sigma(h_{1}h_{2}) &= \tilde{u}(\Delta(h_{1}h_{2}))(h_{1}h_{2})^{-1}\tilde{u}(1)\\
&= \tilde{u}(\Delta(h_{2})\Delta(h_{1}))(h_{1}h_{2})^{-1}\tilde{u}(1)\\
&= \left<\Delta(h_{1}), \Delta(h_{2})\right>\tilde{u}(\Delta(h_{2}))\tilde{z}(\Delta(h_{1}))h_{2}^{-1}h_{1}^{-1}\tilde{u}(1)\\
&= \tilde{u}(\Delta(h_{2}))h_{2}^{-1}\tilde{z}(\Delta(h_{1}))h_{1}^{-1}\tilde{u}(1)\\
&= \tilde{u}(\Delta(h_{2}))h_{2}^{-1}\tilde{u}(1)\tilde{u}(\Delta(h_{1}))h_{1}^{-1}\tilde{u}(1)\\
&= \sigma(h_{2})\sigma(h_{1}).
\end{align*}

\item Let $1\neq \epsilon\in \mu_{n}$ be any non-trivial element in $\mu_{n}$. Identifying $\epsilon$ with ($I_2,\epsilon$), we obtain $\Delta(\epsilon)=1$. We have
\begin{align*}
\sigma(\epsilon) &= \tilde{u}(\Delta(\epsilon))\epsilon^{-1}\tilde{u}(1)\\
&=(u(\Delta(\epsilon)), 1)(I_2,\epsilon)^{-1}(u(1),1)\\
&= \left (u(1), 1 \right )\left (I_2, \epsilon^{-1} \right )\left (u(1), 1 \right )\\
&= \left (u(1), \epsilon^{-1} \right )\left (u(1), 1 \right) & \text{(since $c\left (u(1), I_2 \right )=1$)}\\
&= (I_2, \epsilon^{-1}) & \text{(since $c\left (u(1), u(1) \right )=1$)}\\
&= \epsilon^{-1}.
\end{align*}

\item  For $h=(g, \epsilon)\in \widetilde{G}$, observe that $\Delta(\sigma(h))=\Delta(u(\Delta(g))g^{-1}u(1))= \Delta(h)$. Using the properties (i)-(iv) mentioned in Proposition~\ref{properties to check sigma is an involution}, we obtain
\begin{align*}
\sigma((\sigma(h))) &= \sigma( \tilde{u}(\Delta(h))h^{-1}\tilde{u}(1))\\
&= \tilde{u}(\Delta(h))\tilde{u}(1)h\tilde{u}(\Delta(h)^{-1})\tilde{u}(1)\\
&= \langle \Delta(h), -1\rangle \tilde{z}(\Delta(h))h\langle \Delta(h)^{-1}, -1\rangle \tilde{z}(\Delta(h)^{-1})\\
&= \langle \Delta(h), \Delta(h)\rangle h \tilde{z}(\Delta(h))\tilde{z}(\Delta(h)^{-1})\\
&= \langle \Delta(h), \Delta(h)\rangle h\langle \Delta(h), \Delta(h)^{-1}\rangle \\
&= h.
\end{align*}

\item It follows directly from the fact that $\sigma$ is an anti-automorphism. \\

\item It is enough to consider the case when $h=(g,1)$, with $g\in G$. Indeed, if we have $\sigma(g,1)=(x,\xi)$, then $\sigma(g,\epsilon) = (x, \epsilon^{-1}\xi)$, using (1) and (2). This implies that $x= (p\circ \sigma)(g,\epsilon) = (p\circ\sigma)(g,1)$. To show that $\sigma$ is a lift of $\tau$, it is enough to show that $(p\circ \sigma)(h)=  (\tau\circ p)(h)$ for $h=(g, 1) \in \widetilde{G}$.  
We now evaluate $\sigma$, depending on whether $c$ is non-zero or zero. If $c\neq 0$, we have
\begin{align*}
\sigma(h) &=\sigma(g,1)\\ 
&= (u(\Delta(g)), 1)(g^{-1}, 1)(u(1), 1)\\
&= (u(\Delta(g))g^{-1}, \left<\Delta(g), c\right>)(u(1), 1)\\
&= (u(\Delta(g)g^{-1}u(1), \left<\Delta(g), c\right>)\\
&= (\tau(g), \left<\Delta(g), c\right>),
\end{align*}
and if $c=0$, we have
\begin{align*}
\sigma(h) &=\sigma(g,1)\\ 
&=(u(ad), 1)(g^{-1}, \left<a, d\right>)(u(1), 1)\\
&= (u(ad)g^{-1}, \left<d, a\right>\left<d, d\right>\left<a, d\right>)(u(1), 1)\\
&= (u(ad)g^{-1}, \left<d, d\right>)(u(1), 1)\\
&= (u(ad)g^{-1}u(1), \left<d, d\right>\left<d, -1\right>)\\
&= (\tau(g), 1).
\end{align*}
In both the cases, we see that $(p\circ \sigma)(h)= \tau(g)= (\tau\circ p)(h)$. Hence the result follows.

\end{enumerate}
\end{proof}

\begin{remark}\label{sigma-h-computation} We have used the following computations in part (5) of Lemma \ref{Properties of sigma}. Let $g = \begin{bmatrix}
		a & b \\
		c & d
	\end{bmatrix}\in G$ and $h = (g, 1)\in \widetilde{G}$. Then,

\begin{enumerate}
\item[(1)] $c(g, g^{-1}) = c(g^{-1}, g)=  \begin{cases}
		         1, &   \mbox{if $c \neq 0$} \\
		\langle d, a \rangle, &  \mbox{if $c =0$}
        \end{cases}$.
\vspace{0.1 cm}\\

\item[(2)] $h ^{-1} = (g^{-1}, c(g, g^{-1})^{-1} ) = \begin{cases}
	(g^{-1}, 1), &   \mbox{if $c \neq 0$} \\
	(g^{-1}, \langle a, d \rangle), &  \mbox{if $c =0$}
\end{cases}$. 
\vspace{0.1 cm}\\

\item[(3)] $\sigma(h) =  \begin{cases}
	(\tau(g), \langle \triangle(h), c \rangle ), &   \mbox{if $c \neq 0$} \\
	(\tau(g), 1 ), &  \mbox{if $c=0$}
\end{cases}$.
\end{enumerate} 
\end{remark}

The involution $\sigma$ satisfies a certain conjugation property which plays an important role in the proof of our main result. Before we proceed further, we record a few more lemmas that we need to establish this property of $\sigma$. \\

\begin{lemma}\label{properties-cocycles-used-proof-Gtilde-equal-S}
For $x, g\in G$, let \[\lambda =  c(xg, x^{-1}) c(x, g) c(x, x^{-1})^{-1},\] and \[\beta =  c(x, x^{-1}) c(g^{-1} x ^{-1}, x) c(g, x^{-1})^{-1}  c(x, g x^{-1})^{-1} c(x, g^{-1} x^{-1})^{-1}.\] Then,
\begin{enumerate}
\item[\emph{(1)}]  $c(x g^{-1} x^{-1}, x g x ^{-1}) = c(g^{-1}, g) \beta.$
		\smallskip
		
\item[\emph{(2)}] $\lambda \beta = c(g^{-1} x^{-1}, x) c(x, g^{-1} x ^{-1})^{-1}.$
	\end{enumerate} 
\end{lemma}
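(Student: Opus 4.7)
The plan is to prove both identities by direct calculation using only the defining cocycle identity
\[
c(a,b)\,c(ab,d) \;=\; c(a,bd)\,c(b,d), \qquad a,b,d\in G,
\]
together with the normalization $c(1,g)=c(g,1)=1$, which holds for the Kubota cocycle $c$ (since $X(I_2)=1$ and $\Delta(I_2)=1$ make all Hilbert symbol factors trivial). These two facts also imply the useful identity $c(h,h^{-1})=c(h^{-1},h)$, obtained by applying the cocycle relation to the triple $(h,h^{-1},h)$.

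For part (1), I would expand the left-hand side in two stages. First, applying the cocycle identity to the factorization $xg^{-1}x^{-1}=(xg^{-1})\cdot x^{-1}$ with the second argument $xgx^{-1}$,
\[
c(xg^{-1}x^{-1},\,xgx^{-1}) \;=\; c(xg^{-1},\,gx^{-1})\,c(x^{-1},\,xgx^{-1})\,c(xg^{-1},\,x^{-1})^{-1}.
\]
Second, expand $c(xg^{-1},gx^{-1})$ by the factorization $xg^{-1}=x\cdot g^{-1}$ to get
\[
c(xg^{-1},gx^{-1}) \;=\; c(x,x^{-1})\,c(g^{-1},\,gx^{-1})\,c(x,g^{-1})^{-1}.
\]
The cocycle identity applied to the triple $(g^{-1},g,x^{-1})$, together with normalization, gives $c(g^{-1},gx^{-1})=c(g^{-1},g)\,c(g,x^{-1})^{-1}$, which pulls out the desired factor $c(g^{-1},g)$. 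The remaining cocycle factors $c(x^{-1},xgx^{-1})$ and $c(xg^{-1},x^{-1})^{-1}$, further expanded via the cocycle identity applied to $(x^{-1},x,gx^{-1})$ and $(x,g^{-1},x^{-1})$ respectively, produce precisely the remaining terms $c(x,x^{-1})\,c(g^{-1}x^{-1},x)\,c(g,x^{-1})^{-1}\,c(x,gx^{-1})^{-1}\,c(x,g^{-1}x^{-1})^{-1}$ that make up $\beta$. Collecting everything yields $c(g^{-1},g)\,\beta$.

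For part (2), the strategy is to multiply out $\lambda\beta$ and cancel. The factor $c(x,x^{-1})^{-1}$ in $\lambda$ and the factor $c(x,x^{-1})$ in $\beta$ cancel directly. The remaining terms $c(xg,x^{-1})\,c(x,g)$ combine via the cocycle identity on $(x,g,x^{-1})$ to $c(x,gx^{-1})\,c(g,x^{-1})$. Substituting this back, one is left with $c(g,x^{-1})\,c(g^{-1}x^{-1},x)\,c(g,x^{-1})^{-1}\,c(x,g^{-1}x^{-1})^{-1}$ (the factor $c(x,gx^{-1})$ cancels with its inverse from $\beta$), and the two $c(g,x^{-1})$'s cancel to leave exactly $c(g^{-1}x^{-1},x)\,c(x,g^{-1}x^{-1})^{-1}$.

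The main obstacle is simply bookkeeping: each step is a single application of the cocycle identity, but one has to choose the factorizations in the right order, consistently invert where needed, and keep close track of which cocycle factors are waiting to cancel. There is no conceptual difficulty beyond the cocycle relation itself, but the proof is easy to garble, so it pays to present each application of the identity as a labeled intermediate equation rather than trying to do several at once.
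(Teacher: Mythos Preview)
Your approach is correct and essentially the same as the paper's: both parts are proved by repeatedly applying the cocycle identity $c(a,b)\,c(ab,d)=c(a,bd)\,c(b,d)$ together with the normalizations $c(1,\cdot)=c(\cdot,1)=1$. For part~(2) your argument is literally the paper's (one application to the triple $(x,g,x^{-1})$ followed by cancellation).

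For part~(1) there is a small difference worth noting. The paper first splits off the \emph{left} factor, applying the identity to $(x,\,g^{-1}x^{-1},\,xgx^{-1})$, then to $(g^{-1}x^{-1},\,x,\,gx^{-1})$, and finally to $(g^{-1},g,x^{-1})$; this lands directly on $c(g^{-1},g)\,\beta$ in three steps. Your route splits off the \emph{right} factor first, which also works but is longer, and your outline stops one step short: after expanding $c(x^{-1},xgx^{-1})$ and $c(xg^{-1},x^{-1})^{-1}$ as you describe, you obtain $c(x^{-1},x)\,c(g^{-1},x^{-1})^{-1}$ rather than the factor $c(g^{-1}x^{-1},x)$ appearing in $\beta$. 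One further application of the cocycle identity to the triple $(g^{-1},x^{-1},x)$ gives $c(g^{-1}x^{-1},x)=c(x^{-1},x)\,c(g^{-1},x^{-1})^{-1}$ and finishes the job. So nothing is wrong, but the paper's initial factorization is tidier.
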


\begin{proof} Using the cocycle property, we have for $g_{1}, g_{2}, g_{3}\in G$
    \begin{equation}\label{associativity-cocycles}
        c(g_1 g_2, g_3) c(g_1, g_2) = c(g_1, g_2 g_3) c(g_2, g_3).
    \end{equation}

For (1), taking $g_1 = x, g_2 = g^{-1} x^{-1}$ and $g_3 = xg x^{-1}$ in \eqref{associativity-cocycles}, we get   
     \begin{align}\label{properties-cocycles-used-proof-Gtilde-equal-S-eq1}
        c(x g^{-1} x^{-1}, x g x ^{-1}) = & c(x, x^{-1}) c(g^{-1} x^{-1}, xg x^{-1}) c(x, g^{-1} x^{-1})^{-1}.
               \end{align}
Applying \eqref{associativity-cocycles} again  with $g_1 = g^{-1} x^{-1}, g_2 = x$ and $g_3 = g x^{-1}$ and then again with $g_1 = g^{-1}, g_2 = g$ and $g_3 = x^{-1}$ we get the following equalities:
        \begin{align}\label{properties-cocycles-used-proof-Gtilde-equal-S-eq2}
        c(g^{-1} x^{-1}, xg x^{-1}) = & c(g^{-1} , g x^{-1}) c(g^{-1} x^{-1}, x) c(x, g x^{-1})^{-1} \nonumber 
        \vspace{0.1 cm}\\
        = & [ c(g^{-1}, g) c( g,  x^{-1})^{-1}]  c(g^{-1} x^{-1}, x) c(x, g x^{-1})^{-1}.
    \end{align}

   Now, (1) follows by \eqref{properties-cocycles-used-proof-Gtilde-equal-S-eq1} and \eqref{properties-cocycles-used-proof-Gtilde-equal-S-eq2}.  \\

 Proof of (2) is a direct application of \eqref{associativity-cocycles}  with $g_1 = x, g_2 = g$ and $g_3 = x^{-1}.$ 
\end{proof}

\begin{lemma}\label{g-conjugate-g-2 or g-3} Let $g = \begin{bmatrix}
		\alpha & \beta \\
		0 & \delta
	   \end{bmatrix} \in G.$ Then, $g$ is conjugate to either $\begin{bmatrix}
		\alpha & 0 \\
		0 & \delta
	\end{bmatrix}$ or $\begin{bmatrix}
		\alpha & 1 \\
		0 & \alpha
	\end{bmatrix}.$ 
\end{lemma}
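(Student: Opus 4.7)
The plan is a routine case-by-case conjugation argument, splitting on whether the eigenvalues $\alpha$ and $\delta$ coincide.

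First I would handle the case $\alpha \neq \delta$. Here $g$ has two distinct eigenvalues, so it is diagonalizable. Explicitly, $(1,0)^{\top}$ is an eigenvector for $\alpha$, and a direct computation shows that $(\beta, \delta - \alpha)^{\top}$ is an eigenvector for $\delta$. Setting
\[
P = \begin{bmatrix} 1 & \beta \\ 0 & \delta - \alpha \end{bmatrix} \in G,
\]
one verifies $P^{-1} g P = \begin{bmatrix} \alpha & 0 \\ 0 & \delta \end{bmatrix}$, which gives the first possibility.

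Next I would handle the case $\alpha = \delta$. If $\beta = 0$ then $g = \alpha I_{2}$ is already of the diagonal form $\begin{bmatrix} \alpha & 0 \\ 0 & \alpha \end{bmatrix}$ (it is conjugate to itself). If $\beta \neq 0$, I would conjugate by the diagonal matrix $Q = \begin{bmatrix} \beta & 0 \\ 0 & 1 \end{bmatrix}$, and compute
\[
Q^{-1} g Q = \begin{bmatrix} \beta^{-1} & 0 \\ 0 & 1 \end{bmatrix} \begin{bmatrix} \alpha & \beta \\ 0 & \alpha \end{bmatrix} \begin{bmatrix} \beta & 0 \\ 0 & 1 \end{bmatrix} = \begin{bmatrix} \alpha & 1 \\ 0 & \alpha \end{bmatrix},
\]
giving the second possibility.

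Since both cases are exhausted and no genuine obstruction appears, there is no hard step here; the lemma is essentially the $2\times 2$ Jordan normal form statement over a field, written concretely in terms of $\GL(2,F)$. The only thing to be mildly careful about is ensuring that the conjugating matrices $P$ and $Q$ really lie in $\GL(2,F)$, which is clear since $\det P = \delta - \alpha \neq 0$ in the first case and $\det Q = \beta \neq 0$ in the second case.
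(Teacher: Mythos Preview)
Your proof is correct and follows essentially the same approach as the paper: a case split on whether $\alpha = \delta$, with explicit conjugating matrices in each case. The paper uses slightly different conjugators (e.g.\ $\begin{bmatrix} 1 & \beta/(\alpha-\delta) \\ 0 & -1 \end{bmatrix}$ in the first case and $\begin{bmatrix} 1 & 0 \\ 0 & \beta \end{bmatrix}$ in the second), but this is an immaterial difference.
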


\begin{proof}
	If $\alpha \neq \delta,$ taking $x = \left[\begin{array}{cc}
		1 & \beta/(\alpha- \delta) \\
		0 & -1
	\end{array}\right]$ we get $x g x^{-1} = \begin{bmatrix}
		\alpha & 0 \\
		0 & \delta
	\end{bmatrix}.$ Suppose that $\alpha = \delta.$ If $\beta =0$, then we have $g = \alpha I_2.$ On the other hand, if $\beta \neq 0$, choosing $x = \begin{bmatrix}
		1 & 0 \\
		0 & \beta
	\end{bmatrix}$ we have $x g x^{-1} = \begin{bmatrix}
		\alpha & 1 \\
		0 & \alpha
	\end{bmatrix}.$
\end{proof} 
Let $\sigma$ be defined as in \eqref{definition of sigma}.  Consider the set 
\begin{equation}\label{Definition of the set S}
S:= \{h = (g, \eta) \in \widetilde{G}: \sigma(h) = c(g^{-1}, g)^{-2} \eta^{-2} x h x^{-1}, \mbox{for some $x \in \widetilde{G}$}\}.
\end{equation}

\begin{lemma}\label{closed-under-epsilon}
 Let  $h \in S$ and $\epsilon\in \mu_{n}$. Then $$\epsilon h \in S.$$
 That is, $S$ is closed under multiplication by $\epsilon\in \mu_{n}$. 
\end{lemma}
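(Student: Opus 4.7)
The plan is to exploit the fact that $\mu_n$ sits in the center of $\widetilde{G}$ and that $\sigma$ acts on $\mu_n$ by inversion (Lemma~\ref{Properties of sigma}(2)), so that the defining condition for membership in $S$ is essentially invariant under multiplication by central roots of unity. The same element $x \in \widetilde{G}$ that witnesses $h \in S$ will also witness $\epsilon h \in S$.

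In detail, let $h=(g,\eta)\in S$ and pick $x\in\widetilde{G}$ with
\[
\sigma(h)=c(g^{-1},g)^{-2}\,\eta^{-2}\,x h x^{-1}.
\]
First I would observe that in our construction of $\widetilde{G}$ the element $\epsilon=(I_2,\epsilon)$ is central, so $\epsilon h=(g,\epsilon\eta)$ and $x(\epsilon h)x^{-1}=\epsilon\,(x h x^{-1})$. Next, using that $\sigma$ is an anti-automorphism (Lemma~\ref{Properties of sigma}(1)) together with $\sigma(\epsilon)=\epsilon^{-1}$ (Lemma~\ref{Properties of sigma}(2)) and the centrality of $\epsilon$, I would compute
\[
\sigma(\epsilon h)=\sigma(h)\sigma(\epsilon)=\sigma(h)\epsilon^{-1}=\epsilon^{-1}\sigma(h).
\]

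Substituting these into the required identity and noting that $(\epsilon\eta)^{-2}=\epsilon^{-2}\eta^{-2}$, the candidate relation for $\epsilon h$ becomes
\[
\epsilon^{-1}\sigma(h)=c(g^{-1},g)^{-2}\,\epsilon^{-2}\eta^{-2}\,\epsilon\,(xhx^{-1})=\epsilon^{-1}\bigl(c(g^{-1},g)^{-2}\,\eta^{-2}\,xhx^{-1}\bigr),
\]
which upon cancellation of $\epsilon^{-1}$ is exactly the condition $h\in S$ that we assumed. Hence $\epsilon h\in S$ with the same witness $x$.

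There is no real obstacle here; the only point requiring minor care is that the element $g\in G$ (and hence $c(g^{-1},g)$) is unchanged when one passes from $h=(g,\eta)$ to $\epsilon h=(g,\epsilon\eta)$, so the first factor on the right-hand side of the defining equation of $S$ is the same in both cases. The proof is therefore a short manipulation using the three ingredients: $\mu_n$ central, $\sigma|_{\mu_n}$ is inversion, and $\sigma$ is an anti-automorphism.
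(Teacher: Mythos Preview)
Your proof is correct and follows essentially the same approach as the paper: both use that $\sigma(\epsilon h)=\epsilon^{-1}\sigma(h)$ (from $\sigma$ being an anti-automorphism with $\sigma(\epsilon)=\epsilon^{-1}$) and the centrality of $\epsilon$ to reduce the defining condition for $\epsilon h$ to that for $h$ with the same conjugating element. The only minor difference is that the paper also remarks that $S$ is nonempty by exhibiting $e=(I_2,1)\in S$, which is not strictly required for the lemma as stated.
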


\begin{proof} Consider $e = (I_2, 1)\in \widetilde{G}$. Since $c(I_2, I_2) =1$, we have $e \in S$. Thus, $S$ is non-empty. Let $h = (g, \eta) \in S.$ Choose $z \in \widetilde{G}$ such that  $\sigma(h) = c(g^{-1}, g)^{-2} \eta^{-2} z h z^{-1}.$  Then, we have
\begin{align*}
\sigma (\epsilon h) &= \epsilon^{-1} \sigma(h) 
\vspace{0.1 cm}\\
&= \epsilon^{-1} \left[ c(g^{-1}, g)^{-2} \eta^{-2} z h z^{-1}\right] 
\vspace{0.1 cm}\\
&=  c(g^{-1}, g)^{-2} (\eta \epsilon)^{-2} z (\epsilon h) z^{-1}.
\end{align*} 
\end{proof}

\begin{lemma}\label{h_i-arein-S} Consider the matrices $g_{1}, g_{2}, g_{3}$ in $G$ defined as follows: \\
\[g_1 = \begin{bmatrix}
		0 & v \\
		1 & w
	\end{bmatrix}: v\in F^{\times}, w\in F,\,g_2 = \begin{bmatrix}
		a & 0 \\
		0 & d
	\end{bmatrix}: a \neq d, a,d\in F^{\times} \textrm{ and } g_3 = \begin{bmatrix}
		b & 1 \\
		0 & b
	\end{bmatrix}:b\in F^{\times}.\] For $\epsilon \in \mu_n,$ and $i \in \{1,2,3\}$, we have $(g_i, \epsilon) \in S$. 
\end{lemma}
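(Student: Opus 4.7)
The plan is to first apply Lemma \ref{closed-under-epsilon} to reduce the claim to the case $\epsilon = 1$: once I show $h_i := (g_i, 1) \in S$ for each $i \in \{1,2,3\}$, membership of $(g_i, \epsilon) = \epsilon \cdot h_i$ follows automatically. For each such $i$, I would exhibit an explicit $x_i \in G$ satisfying $x_i g_i x_i^{-1} = \tau(g_i)$, lift it to $\tilde{x}_i = (x_i, 1) \in \widetilde{G}$, and verify
\[\sigma(h_i) = c(g_i^{-1}, g_i)^{-2}\, \tilde{x}_i h_i \tilde{x}_i^{-1}\]
by computing $\sigma(h_i)$ via Remark \ref{sigma-h-computation}(3) and the $\mu_n$-component of $\tilde{x}_i h_i \tilde{x}_i^{-1}$ via the conjugation cocycle factor $\lambda_i = c(x_i g_i, x_i^{-1}) c(x_i, g_i) c(x_i, x_i^{-1})^{-1}$ implicit in Lemma \ref{properties-cocycles-used-proof-Gtilde-equal-S}(1).

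The case $i = 3$ is immediate: $\tau(g_3) = g_3$ by a direct calculation, so $x_3 = I_2$ works, and the identity reduces to $\langle b, b \rangle^{-2} = 1$, which is Proposition \ref{properties of the Hilbert symbol}(v). For $i = 1$, I take $x_1 = \begin{bmatrix} 0 & v \\ 1 & 0 \end{bmatrix}$. Since the $(2,1)$-entry of $g_1$ is $1 \neq 0$, Remark \ref{sigma-h-computation}(1) yields $c(g_1^{-1}, g_1) = 1$ and $\sigma(h_1) = (\tau(g_1), 1)$; a direct evaluation of the three cocycles via the Kazhdan-Patterson formula produces $\lambda_1 = \langle v, v \rangle \langle v, -1 \rangle = \langle v, -v \rangle = 1$ by Proposition \ref{properties of the Hilbert symbol}(v).

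For $i = 2$, the naive choice $x_2 = w_0$ produces the wrong cocycle factor; instead I take $x_2 = \begin{bmatrix} 0 & d \\ 1 & 0 \end{bmatrix}$. Since the $(2,1)$-entry of $g_2$ is zero, Remark \ref{sigma-h-computation} gives $c(g_2^{-1}, g_2) = \langle d, a \rangle$ and $\sigma(h_2) = (\tau(g_2), 1)$, so the target cocycle factor for $\tilde{x}_2 h_2 \tilde{x}_2^{-1}$ is $\langle d, a \rangle^2$. Computing each of the three cocycles and repeatedly invoking $\langle a, -a \rangle = 1$ and $\langle 1, -1 \rangle = 1$ yields exactly $\lambda_2 = \langle a, d \rangle^{-2} = \langle d, a \rangle^2$, as required.

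The main technical obstacle is the $i = 2$ case: for a general conjugator $x = \begin{bmatrix} 0 & \lambda \\ 1 & 0 \end{bmatrix}$ the same computation produces $\lambda_2 = \langle a, d\lambda \rangle^{-1}$, and matching this with $\langle d, a \rangle^2$ amounts to requiring $\langle a, \lambda/d \rangle = 1$ for every choice of $a \in F^{\times}$. Since $a$ is not fixed, Hilbert symbol cancellation is unavailable; the clean universal fix is $\lambda = d$, which makes $\langle a, 1 \rangle = 1$ automatic. This delicate matching of cocycle factors---rather than the conjugation at the $G$-level, which is standard---is the real content of the lemma.
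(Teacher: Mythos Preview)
Your proposal is correct and follows essentially the same strategy as the paper: reduce to $\epsilon=1$ via Lemma~\ref{closed-under-epsilon}, then for each $i$ exhibit an explicit conjugator in $G$, lift it, and compare the $\mu_n$-components using Remark~\ref{sigma-h-computation}. The only difference is your conjugator for $g_1$: you use $x_1=\begin{bmatrix}0&v\\1&0\end{bmatrix}$ and obtain $\lambda_1=\langle v,v\rangle\langle v,-1\rangle=\langle v,-v\rangle=1$, whereas the paper uses the unipotent $y=\begin{bmatrix}1&0\\-w/v&1\end{bmatrix}$, for which $c(yg_1,y^{-1})=c(y,g_1)=1$ and $c(y,y^{-1})=1$ vanish more directly; both routes yield the same conclusion, and your choices for $g_2$ (namely $\begin{bmatrix}0&d\\1&0\end{bmatrix}$) and $g_3$ (namely $I_2$) coincide with the paper's.
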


\begin{proof}
For 	$i \in \{1,2,3\},$ write $h_i = (g_i, 1).$  By Lemma \ref{closed-under-epsilon}, it is enough to show that $h_i \in S.$
Consider $h_{1}=(g_{1},1)$. Let $y= \left[\begin{array}{cc}
	1 & 0 \\
	-w/v & 1
\end{array}\right]$ and $\widetilde{y} = (y, 1).$ It is easy to see that \[\tau(g_1) = \begin{bmatrix}
	w & v \\
	1 & 0
\end{bmatrix} = y g_1 y^{-1}.\]  
A simple calculation gives $c(y g_1, y^{-1})=1$ and $c(y, g_1)=1.$ Since $\langle \triangle(h_1), 1 \rangle = c(g_1^{-1}, g_1)= 1$, using (3) of Remark \ref{sigma-h-computation}, it follows that $$\widetilde{y} h_1 \widetilde{y}^{-1} = (\tau(g_1), 1)=\sigma(h_1)$$ and $h_1\in S$. \\

Consider $h_{2}=(g_{2},1)$. Let $z= \begin{bmatrix}
	0 & d \\
	1 & 0
\end{bmatrix}$ and set $\widetilde{z} = (z, 1).$ Clearly we have \[z g_2 z^{-1} = \begin{bmatrix}
	d & 0 \\
	0 & a
\end{bmatrix} = \tau(g_2).\]
It is easy to see that $c(z g_2, z^{-1}) = c(z , z^{-1})=1$, $c( g_2^{-1}, g_2)= \langle d, a \rangle$  and $c(z, g_2) = \langle d, a \rangle ^{2}$. Using (3) of Remark \ref{sigma-h-computation}, we get 
$$\widetilde{z} h_2 \widetilde{z}^{-1} = (\tau(g_2), \langle d, a \rangle ^2 )= c( g_2^{-1}, g_2)^{2}(\tau(g_2), 1)= c( g_2^{-1}, g_2)^{2}\sigma(h_{2}).$$ Thus $h_{2}\in S$. \\

We now show that $h_3=(g_{3},1) \in S$. It is clear that  $\tau(g_3) = g_3$. By (3) of Remark \ref{sigma-h-computation}, we have $\sigma(h_3)  = (g_3, 1)$. Since $c( g_3^{-1}, g_3)=  \langle b, b \rangle$ and $\langle b, b \rangle^2 =1,$ we get $\sigma(h_3) = c( g_3^{-1}, g_3)^{-2}  h_3.$ 
\end{proof}

\begin{lemma}\label{working-equation-prove-Gtilde-equaltoS}
Let $g_1,$ $g_2$ and $g_3$ be as in Lemma~\ref{h_i-arein-S}. Let $g \in G$, and let $h = (g,1)\in \widetilde{G}$. Suppose that there exists $x\in G$ such that $x g x^{-1} = g_i$, for some $i \in \{1,2,3\}$. Let $A = c(x^{-1} g_i^{-1}, x) c(x, x^{-1}g_i^{-1})^{-1}$. Then, for  $s \in F^{\times}$ there exists $z \in \widetilde{G}$ such that \[\sigma(h) =  \langle s, \triangle(h) \rangle A^{-2}  c(g^{-1}, g)^{-2} z h z^{-1}.\] 
\end{lemma}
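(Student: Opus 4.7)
The plan is to reduce the statement to the case $g = g_i$ already handled in Lemma~\ref{h_i-arein-S} by conjugating $h$ by the lift $\tilde{x}=(x,1)$, then to absorb the resulting cocycle discrepancy using Lemma~\ref{properties-cocycles-used-proof-Gtilde-equal-S}, and finally to produce the $\langle s, \triangle(h)\rangle$ factor by twisting the conjugator with a central element.

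First I would set $\tilde{x} = (x,1)\in \widetilde{G}$ and compute directly in the central extension that
\[
\tilde{x}\, h\, \tilde{x}^{-1} = (g_i,\lambda), \qquad \lambda = c(xg, x^{-1})\, c(x,g)\, c(x,x^{-1})^{-1},
\]
where $\lambda$ is precisely the scalar from Lemma~\ref{properties-cocycles-used-proof-Gtilde-equal-S}. By Lemma~\ref{closed-under-epsilon} combined with Lemma~\ref{h_i-arein-S}, the element $(g_i,\lambda)$ lies in $S$, so there is $y\in \widetilde{G}$ with
\[
\sigma\bigl((g_i,\lambda)\bigr) = c(g_i^{-1}, g_i)^{-2}\,\lambda^{-2}\, y\,(g_i,\lambda)\,y^{-1}.
\]
Applying $\sigma$ to $\tilde{x}\, h\, \tilde{x}^{-1} = (g_i,\lambda)$, and using the anti-automorphism property together with $\sigma(\tilde{x}^{-1}) = \sigma(\tilde{x})^{-1}$ (parts (1) and (4) of Lemma~\ref{Properties of sigma}), the left side becomes $\sigma(\tilde{x})^{-1}\sigma(h)\sigma(\tilde{x})$, and rearranging yields
\[
\sigma(h) = c(g_i^{-1}, g_i)^{-2}\,\lambda^{-2}\, z_0\, h\, z_0^{-1}, \qquad z_0 := \sigma(\tilde{x})\, y\, \tilde{x}.
\]

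The central computational step is to rewrite the scalar $c(g_i^{-1}, g_i)^{-2}\lambda^{-2}$ as $A^{-2}c(g^{-1},g)^{-2}$. Since $g_i^{\pm 1} = x g^{\pm 1} x^{-1}$, Lemma~\ref{properties-cocycles-used-proof-Gtilde-equal-S}(1) gives $c(g_i^{-1}, g_i) = c(g^{-1},g)\,\beta$, so the scalar becomes $c(g^{-1},g)^{-2}(\lambda\beta)^{-2}$. Lemma~\ref{properties-cocycles-used-proof-Gtilde-equal-S}(2) then identifies $\lambda\beta = c(g^{-1}x^{-1}, x)\, c(x, g^{-1}x^{-1})^{-1}$, and the identity $g^{-1}x^{-1} = x^{-1}g_i^{-1}$ (from $g = x^{-1}g_i x$) matches this with the defining expression for $A$.

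For the final step, given any $s \in F^{\times}$, I would set $z = z_0\,\tilde{z}(s^{-1})$. Proposition~\ref{properties to check sigma is an involution}(i) then yields
\[
\tilde{z}(s^{-1})\, h\, \tilde{z}(s^{-1})^{-1} = \langle s, \triangle(h)\rangle^{-1}\, h,
\]
so $z\, h\, z^{-1} = \langle s, \triangle(h)\rangle^{-1}\, z_0\, h\, z_0^{-1}$, and substituting into the previous display gives the claimed identity. The main obstacle I anticipate is the bookkeeping in the cocycle step: one must verify carefully that conjugating $h$ by $\tilde{x}$ produces exactly the scalar $\lambda$ appearing in Lemma~\ref{properties-cocycles-used-proof-Gtilde-equal-S}, and that the product $\lambda\beta$ telescopes cleanly to $A$. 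Once those identifications are in place, everything else is formal manipulation inside the central extension.
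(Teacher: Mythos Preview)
Your proposal is correct and follows essentially the same route as the paper: lift the conjugation $xgx^{-1}=g_i$ to $\widetilde{G}$ via $\tilde{x}=(x,1)$, invoke membership of $(g_i,\lambda)$ in $S$, apply $\sigma$, and then use Lemma~\ref{properties-cocycles-used-proof-Gtilde-equal-S} to convert $c(g_i^{-1},g_i)^{-2}\lambda^{-2}$ into $A^{-2}c(g^{-1},g)^{-2}$. The only cosmetic difference is the placement of the $\langle s,\Delta(h)\rangle$ factor: the paper builds it in from the outset by conjugating with $u=\tilde{x}\,\tilde{z}(s)^{-1}$ and setting $z=\sigma(u)\,y\,\tilde{x}$, whereas you first obtain the formula with $z_0=\sigma(\tilde{x})\,y\,\tilde{x}$ and then insert the factor afterward by taking $z=z_0\,\tilde{z}(s^{-1})$; both choices yield the same identity.
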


\begin{proof} Let $\widetilde{x} = (x, 1)$ and  $\lambda = c(xg, x^{-1}) c(x, g) c(x, x^{-1})^{-1}$. Since $xgx^{-1}=g_{i}$, we have
\begin{equation}\label{Gtilde-equal-S-eq1}
\widetilde{x} h \widetilde{x}^{-1} = (xg x^{-1}, \lambda) = (g_i, \lambda).
\end{equation}
Let $s \in F^{\times}.$ Using (i) of Proposition~\ref{properties to check sigma is an involution}, we have 
	\begin{align}\label{Gtilde-equal-S-eq2}
		(g_i, \lambda) &= \widetilde{x} \widetilde{z}(s)^{-1} \widetilde{z}(s) h \widetilde{z}(s)^{-1} \widetilde{z}(s) \widetilde{x}^{-1} \nonumber 
        \vspace{0.1 cm} \\
  		&= \langle s, \triangle(h) \rangle \widetilde{x} \widetilde{z}(s)^{-1} h \widetilde{z}(s) \widetilde{x}^{-1} \nonumber 
        \vspace{0.1 cm}\\
        &= \langle s, \triangle(h) \rangle u h u^{-1}; \mbox{ where $u = \widetilde{x} \widetilde{z}(s)^{-1}$.}
	\end{align}
	
 By Lemma \ref{h_i-arein-S}, $(g_i, \lambda) \in S.$  Choose $y \in \widetilde{G}$ such that
	\begin{equation}\label{Gtilde-equal-S-eq4}
		\sigma((g_i, \lambda)) = c(g_i^{-1}, g_i)^{-2} \lambda^{-2} y (g_i, \lambda) y^{-1}.
	\end{equation}  
Applying $\sigma$ on both sides to \eqref{Gtilde-equal-S-eq2}, and using Lemma \ref{Properties of sigma} and \eqref{Gtilde-equal-S-eq4}, we get 
\begin{equation}\label{Gtilde-equal-S-eq5}
		c(g_i^{-1}, g_i)^{-2} \lambda^{-2} y (g_i, \lambda) y^{-1} = \langle s, \triangle(h) \rangle^{-1} \sigma(u)^{-1} \sigma(h) \sigma(u).
\end{equation}
Simplifying \eqref{Gtilde-equal-S-eq5}, we have
	\begin{align}\label{Gtilde-equal-S-eq6}
		\sigma(h) = & \langle s, \triangle(h) \rangle c(g_i^{-1}, g_i)^{-2} \lambda^{-2} \sigma(u) y (g_i, \lambda) y^{-1} \sigma(u)^{-1} \nonumber  \vspace{0.1 cm}  \\
		= & \langle s, \triangle(h) \rangle c(g_i^{-1}, g_i)^{-2} \lambda^{-2} \sigma(u) y \widetilde{x}  \left[\widetilde{x}^{-1}(g_i, \lambda) \widetilde{x}\right] \widetilde{x}^{-1} y^{-1} \sigma(u)^{-1}.
	\end{align}
	Write $z = \sigma(u) y \widetilde{x}.$ By \eqref{Gtilde-equal-S-eq1},  $\widetilde{x}^{-1}(g_i, \lambda) \widetilde{x} = h.$ Thus, \eqref{Gtilde-equal-S-eq6} gives:
	\begin{equation*}\label{Gtilde-equal-S-eq7}
		\sigma(h) =  \langle s, \triangle(h) \rangle c(g_i^{-1}, g_i)^{-2} \lambda^{-2} z h z^{-1}.
	\end{equation*}
	
Applying Lemma~\ref{properties-cocycles-used-proof-Gtilde-equal-S}, the result follows.
\end{proof}

\begin{theorem}\label{conjugacy property of sigma} For $h = (g, \eta) \in \widetilde{G},$ we have $$\sigma(h) = c(g^{-1}, g)^{-2} \eta^{-2} z h z^{-1},$$ for some $z \in \widetilde{G}.$ 
\end{theorem}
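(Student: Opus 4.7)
The plan is to establish that $\widetilde{G}$ coincides with the set $S$ defined in \eqref{Definition of the set S}. By Lemma~\ref{closed-under-epsilon} it suffices to consider elements of the form $h=(g,1)$ with $g\in G$: for a general $h=(g,\eta)=\eta\cdot(g,1)$, once $(g,1)\in S$ is known, the same lemma delivers the factor $\eta^{-2}$ on the right-hand side. The proof then splits on the conjugacy class of $g$ in $G$.

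First I would handle the scalar case $g=\alpha I_2$ by direct computation from the definition \eqref{definition of sigma}. Applying the cocycle identities of Proposition~\ref{properties to check sigma is an involution} (together with $\langle\alpha,\alpha\rangle=\langle\alpha,-1\rangle^{-1}$, a consequence of Proposition~\ref{properties of the Hilbert symbol}(v)) collapses the calculation to $\sigma((\alpha I_2,1))=(\alpha I_2,1)$. On the other side, $c(g^{-1},g)=\langle\alpha^{-1},\alpha\rangle=\langle\alpha,\alpha\rangle^{-1}$, and Proposition~\ref{properties of the Hilbert symbol}(ii) and (v) give $c(g^{-1},g)^{-2}=\langle\alpha,\alpha\rangle^{2}=\langle\alpha,\alpha^{2}\rangle=1$. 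Hence the required identity holds with $z=(I_2,1)$.

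If $g$ is non-scalar, the rational canonical form for $2\times 2$ matrices together with Lemma~\ref{g-conjugate-g-2 or g-3} (to handle the upper-triangular sub-case) shows that $g$ is conjugate in $G$ to one of the model matrices $g_1$, $g_2$, $g_3$ of Lemma~\ref{h_i-arein-S}. Fixing an explicit conjugator $x\in G$ with $xgx^{-1}=g_i$, Lemma~\ref{working-equation-prove-Gtilde-equaltoS} provides, for every $s\in F^{\times}$,
\[\sigma(h) = \langle s, \Delta(h)\rangle\, A^{-2}\, c(g^{-1}, g)^{-2}\, z h z^{-1},\]
where $A=c(x^{-1}g_i^{-1},x)c(x,x^{-1}g_i^{-1})^{-1}$ depends on $x$. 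To conclude, it suffices to exhibit $s$ satisfying $\langle s,\Delta(h)\rangle=A^{2}$.

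The main technical step, and the principal obstacle, is the explicit evaluation of $A$ in each of the three sub-cases. I would take $x$ to be the change-of-basis matrix given in Lemma~\ref{g-conjugate-g-2 or g-3} when $g$ is upper triangular, and an analogous matrix conjugating $g$ to companion form when $g$ has an irreducible characteristic polynomial. Computing the five Kubota cocycle values entering $\lambda$ and $\beta$ of Lemma~\ref{properties-cocycles-used-proof-Gtilde-equal-S} and simplifying via the identities in Proposition~\ref{properties of the Hilbert symbol}, I expect $A$ to emerge each time as a Hilbert symbol of the form $\langle\pm 1,\cdot\rangle$ (as already happens for the representatives handled in Lemma~\ref{h_i-arein-S}), whose square is automatically trivial by bilinearity; then the choice $s=1$ will do the job. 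The bookkeeping in these Kubota-cocycle computations is routine but demands care, since the definition of $X$ in the cocycle switches formulas according to whether the $(2,1)$-entry vanishes, and one must apply the Hilbert symbol identities of Proposition~\ref{properties of the Hilbert symbol} in the right order to collapse the five factors down to a $2$-torsion element of $\mu_n$.
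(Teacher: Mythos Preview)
Your overall skeleton matches the paper's: reduce to $h=(g,1)$ via Lemma~\ref{closed-under-epsilon}, dispose of the scalar case directly, and for non-scalar $g$ invoke Lemma~\ref{working-equation-prove-Gtilde-equaltoS} after conjugating to one of $g_1,g_2,g_3$. The scalar computation and the reduction are fine.

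The genuine gap is your expectation that $A=c(x^{-1}g_i^{-1},x)\,c(x,x^{-1}g_i^{-1})^{-1}$ will always collapse to a symbol of the form $\langle\pm 1,\cdot\rangle$, so that $A^2=1$ and $s=1$ suffices. This fails in Case~3 (the Jordan block $g_3$). With $x=\begin{bmatrix} f & p\\ q & r\end{bmatrix}$ the paper computes $A=\langle fr,b\rangle$ when $q=0$ and $A=\langle -q^2,b\rangle$ when $q\neq 0$; for $n\geq 3$ these are not $2$-torsion in $\mu_n$, and one checks that no choice of conjugator (i.e.\ no left translate of $x$ by an element of $C_G(g_3)$) forces $A^2=1$ in general. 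The paper instead exploits the free parameter $s$: since $\Delta(h)=b^2$ here, one has $\langle s,\Delta(h)\rangle=\langle s,b\rangle^2$, and the choices $s=fr$ (resp.\ $s=-q^2$) give $\langle s,\Delta(h)\rangle A^{-2}=1$. Your proposal discards exactly this degree of freedom.

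A secondary issue arises in Case~2 (the split semisimple case $g_2$). For a generic conjugator $x$ the quantity $A$ is again not visibly $2$-torsion, and here the $s$-trick is unavailable since $\Delta(h)=ad$ is not a square. The paper resolves this by replacing $x$ with $y=tx$ for a carefully chosen diagonal $t$ (depending on whether $q=0$), so that the resulting $A$ equals $1$ exactly. Your proposal to use ``the change-of-basis matrix given in Lemma~\ref{g-conjugate-g-2 or g-3}'' only covers upper-triangular $g$ and does not address this normalization step for general $g$ conjugate to $g_2$.
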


\begin{proof} Let $g \in G$ and set $h = (g,1).$	By Lemma \ref{closed-under-epsilon}, it is enough to show that $h \in S$. Suppose that $g = \begin{bmatrix}
\alpha & 0 \\
0 & \alpha
\end{bmatrix}$, for some $\alpha \in F^{\times}.$ Take $u= \begin{bmatrix}
0 & \alpha \\
1 & 0
\end{bmatrix}$, and write $\widetilde{u} = (u,1).$ Since $\tau(g) = g,$ by (3) of Remark \ref{sigma-h-computation}, we get  $\sigma(h) = h.$   It is easy to see that  $c(u g, u^{-1}) = 1= c(u , u^{-1})$ and  $c(u, g) = \langle \alpha, \alpha \rangle^2 =1.$ Hence, $\widetilde{u} h \widetilde{u}^{-1} = h.$ Observe that $c(g^{-1} , g) = \langle \alpha, \alpha \rangle.$  Thus,    $$\sigma(h) = c(g^{-1} , g)^{-2} \widetilde{u} h \widetilde{u}^{-1}.$$
Suppose that $g$ is not a scalar matrix. 
Let $g_1,$ $g_2$ and $g_3$ be as in Lemma \ref{h_i-arein-S}.  Using the rational canonical form, it follows that $g$ is conjugate to a matrix of the type either $g_1,$ $g_2,$ or $g_3$. We discuss these cases separately. 

\textbf{Case (1)}:   Assume that $g$ is conjugate to $g_1,$ where $g_1 = \begin{bmatrix}
0 & v \\
1 & w
\end{bmatrix}$.    Suppose $g = \begin{bmatrix}
	\alpha & \beta \\
	\gamma & \delta
\end{bmatrix}$.  Since $g$ is conjugate to $g_1,$ we have $$v = - \Delta(g) = -\det(g) \quad \text{and} \quad w = \tr(g) = \alpha + \delta.$$ 
Also, $\gamma \neq 0$ by Lemma \ref{g-conjugate-g-2 or g-3}. Consider $x = \begin{bmatrix}
0 & v \gamma^{-1} \\
1 & \delta \gamma^{-1}
\end{bmatrix}$. It is easy to verify that $x g x^{-1} = g_1.$ By Lemma \ref{working-equation-prove-Gtilde-equaltoS},  there exists $z \in \widetilde{G}$ such that 
\begin{equation}\label{working-equatioto-provethe-theorem-1}
\sigma(h) =   A^{-2}  c(g^{-1}, g)^{-2} z h z^{-1},
\end{equation} 
 where $A = c(x^{-1} g_1^{-1}, x) c(x, x^{-1}g_1^{-1})^{-1}.$ We now compute $c(x^{-1} g_1^{-1}, x)$ and  $c(x, x^{-1}g_1^{-1}).$ These are as follows:
\begin{equation*}
	c(x^{-1} g_1^{-1}, x) = \begin{cases}
		\langle 1, -\triangle(g) \rangle, & w=0 \\
		\langle w^{-1}, -\triangle(g) \rangle, & w \neq 0
	\end{cases} \quad \text{and} \quad c(x, x^{-1}g_1^{-1}) = \begin{cases}
		\langle -\triangle(g)^{-1},  \triangle(g)^{-1} \rangle, & w=0 \\
		\langle -\triangle(g), w \rangle, & w \neq 0.
	\end{cases}
\end{equation*}
This implies that  $c(x^{-1} g_1^{-1}, x) c(x, x^{-1}g_1^{-1})^{-1} = 1$ irrespective of $w$ is zero or non-zero.
Hence, $A =1.$ Thus, \eqref{working-equatioto-provethe-theorem-1} reduces to $$ \sigma(h) =    c(g^{-1}, g)^{-2} z h z^{-1}.$$ 

\textbf{Case (2)}:  Assume that $g$ is conjugate to $g_2,$ where $g_2 = \left[\begin{array}{cc}
a & 0\\
0 & d
\end{array}\right]$ with $a \neq d.$ Choose  $x \in G$ such that $x g x^{-1} = g_2.$  We claim the following:  
\begin{equation}\label{g-conjugate-g_2}
\mbox{There exists $y \in G$ such that $y g y^{-1} = g_2$ and  $c(y^{-1} g_2^{-1}, y) c(y, y^{-1}g_2^{-1})^{-1} =1.$}
\end{equation}
Suppose that \eqref{g-conjugate-g_2} is true. Hence, in this case result follows by applying Lemma \ref{working-equation-prove-Gtilde-equaltoS} with $y g y^{-1} = g_2.$ 
 We now prove the claim \eqref{g-conjugate-g_2}. Write  $x = \begin{bmatrix}
f & p \\
q & r
\end{bmatrix}$, and consider the following cases:\\

(i) Suppose that $q =0.$ Take $y = tx,$ where $t= \begin{bmatrix}
	f^{-1} & 0\\
	0 & r^{-1}
\end{bmatrix}$. Clearly, $y g y^{-1} = g_2.$  Also, we have	$c(y^{-1} g_2^{-1}, y) = 	\langle 1, a \rangle$ and $c(y, y^{-1}g_2^{-1}) = \langle d, 1 \rangle.$ Thus, $$c(y^{-1} g_2^{-1}, y) c(y, y^{-1}g_2^{-1})^{-1} =1.$$ 

(ii) Suppose that $q \neq 0.$ Let $y = tx,$ where $t= \begin{bmatrix}
	p^{-1} & 0\\
	0 & q^{-1} d^{-1}
\end{bmatrix}$ if $f =0$ and  $t= \begin{bmatrix}
\frac{d}{(d-a)f} & 0\\
0 & q^{-1} d^{-1}
\end{bmatrix}$ if $f \neq 0.$ It is clear that  $y g y^{-1} = g_2.$ Also, 	$c(y^{-1} g_2^{-1}, y) = 	\langle 1, -d \rangle$ and $c(y, y^{-1}g_2^{-1}) = \langle 1, -a \rangle.$ Thus,  $$c(y^{-1} g_2^{-1}, y) c(y, y^{-1}g_2^{-1})^{-1} =1.$$

\textbf{Case (3)}:  Assume that $g$ is conjugate to $g_3,$ where $g_3 = \begin{bmatrix}
	b & 1\\
	0 & b
\end{bmatrix}$. Let  $x \in G$ such that $x g x^{-1} = g_3.$ 
Given  $s \in F^{\times},$ there exists $z \in \widetilde{G}$ (see Lemma \ref{working-equation-prove-Gtilde-equaltoS}) such that 
\begin{equation}\label{working-equatioto-provethe-theorem-3}
	\sigma(h) =  \langle s, \triangle(h) \rangle A^{-2} c(g^{-1}, g)^{-2} z h z^{-1}.
\end{equation} 
 Here, $A = c(x^{-1} g_3^{-1}, x) c(x, x^{-1}g_3^{-1})^{-1}.$ Write  $x = \begin{bmatrix}
	f & p \\
	q & r
\end{bmatrix}$. 
Then, we have \begin{equation*}
	c(x^{-1} g_3^{-1}, x) = \begin{cases}
		\langle r, fb \rangle, & q=0 \\
		\langle q, b \rangle, & q \neq 0
	\end{cases} \quad \text{and} \quad c(x, x^{-1}g_3^{-1}) = \begin{cases}
		\langle br, f \rangle, & q=0 \\
		 \langle b, -q \rangle, & q \neq 0.
	\end{cases}
\end{equation*}

Thus, 
 \begin{equation*}
	A = \begin{cases}
		\langle fr, b \rangle, & q=0 \\
		\langle -q^2, b \rangle, & q \neq 0.
	\end{cases}
	\end{equation*}
Note that  $\triangle(h) = b^2.$ If $q=0,$ taking $s = fr$ we get  $\langle s, \triangle(h) \rangle A^{-2} =1.$ Thus,
$$\sigma(h) = c(g^{-1}, g)^{-2} z h z^{-1},$$  for some $z \in \widetilde{G}$ via \eqref{working-equatioto-provethe-theorem-3}. Also, $s = - q^2$ works when $q \neq 0.$  Proof of the theorem is complete.
\end{proof}


\begin{corollary}\label{G-tilde-equal-S} Let $S$ be defined as in ~\eqref{Definition of the set S}. We have $$\widetilde{G}=S.$$
\end{corollary}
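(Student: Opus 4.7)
The plan is extremely short: the corollary is essentially a restatement of Theorem~\ref{conjugacy property of sigma} in the language of the set $S$. By definition of $S$ we have $S \subseteq \widetilde{G}$, so only the reverse inclusion $\widetilde{G} \subseteq S$ requires justification.

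I would simply take an arbitrary $h = (g, \eta) \in \widetilde{G}$ and apply Theorem~\ref{conjugacy property of sigma} to produce $z \in \widetilde{G}$ such that
\[
\sigma(h) \;=\; c(g^{-1}, g)^{-2} \eta^{-2} \, z h z^{-1}.
\]
This is exactly the condition in the definition \eqref{Definition of the set S} of $S$, so $h \in S$, and the inclusion $\widetilde{G} \subseteq S$ follows. Combined with $S \subseteq \widetilde{G}$, this gives $\widetilde{G} = S$.

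There is no real obstacle here: all the genuine work has already been carried out in Theorem~\ref{conjugacy property of sigma}, which handled the scalar case directly and reduced the general case to the three rational-canonical-form representatives $g_1, g_2, g_3$ through Lemma~\ref{h_i-arein-S} and Lemma~\ref{working-equation-prove-Gtilde-equaltoS}. The corollary is therefore best presented as a one-line consequence, serving to repackage the theorem in a form convenient for use in the main theorem of Section~\ref{main theorem}.
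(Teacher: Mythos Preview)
Your proposal is correct and matches the paper's own proof essentially verbatim: the paper simply notes $S \subset \widetilde{G}$ is clear and invokes Theorem~\ref{conjugacy property of sigma} for the reverse inclusion. There is nothing to add.
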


\begin{proof} Clearly $S\subset \widetilde{G}$. Using Theorem~\ref{conjugacy property of sigma}, it follows that $\widetilde{G}\subset S$. Hence the result. 
\end{proof}

In the following remark, we make some pertinent observations about the proof of $\widetilde{G} = S$ (Corollary \ref{G-tilde-equal-S}) comparing it with the case when $\widetilde{G}$ is the $2$-fold metaplectic cover of $G$ considered in \cite{KumAji}.

\begin{remark} \hfill \\
\begin{enumerate}
    \item For $n \geq 3,$ let $\epsilon \in \mu_n$ such that $\epsilon^2 \neq 1.$ Put $h = (g, \epsilon),$ where $g = \begin{bmatrix}
	1 & 1 \\
	0 & 1
\end{bmatrix}$. We will show that $\sigma(h)$ and $h$ are not conjugate in $\widetilde{G}.$ Since $\tau(g) = g,$ we get $\sigma(h) = (g, \epsilon^{-1})$ by Lemma \ref{Properties of sigma} and Remark \ref{sigma-h-computation}. Let $z = (x, \eta) \in \widetilde{G}.$ Then,  $$z h z^{-1} = (xg x^{-1}, \lambda \epsilon),$$ where $\lambda = c(xg, x^{-1}) c(x, g) c(x, x^{-1})^{-1}$.  For $\sigma(h) = z h z^{-1},$ we must have $g = x g x^{-1},$ i.e., $x \in C_{G}(g)$. Note that $ C_{G}(g) = \left\{ \begin{bmatrix}
	a & b \\
	0 & a
\end{bmatrix}: a \in F^{\times}, b \in F \right\}.$ For any $x \in C_{G}(g),$ we have $c(xg, x^{-1}) = \langle a, a \rangle =c(x, x^{-1})$ and $ c(x, g) = 1$ yielding $\lambda =1.$ Thus, $$\sigma(h) \neq z h z^{-1},$$ for any $z \in \widetilde{G}$. Hence, the set $S$ considered in \cite{KumAji} does not work for $n \geq 3,$ i.e.,  $\widetilde{G} \neq S$. \\

\item It is clear that the set $S$ defined in \eqref{Definition of the set S} and considered  in \cite{KumAji} agree when $n=2$. Also, conjugacy invariance of $S$ was used to show that $\widetilde{G} = S$ in the case when $n=2$ (see \cite[Lemma 4.10 \& Theorem 4.11]{KumAji}). \\

\item In the current context, it is not immediate to see whether $S$ is conjugation invariant or not. However, once Corollary \ref{G-tilde-equal-S} is established then it is clear that $S$ is conjugation invariant. \\

\end{enumerate}

\end{remark}

\subsection{Other lifts of the standard involution and their properties} In this section, we define other lifts of the standard involution and show that they also satisfy analogous properties like the lift $\sigma$ of $\tau$. 

\begin{lemma}\label{properties of sigms-alpha} For each $\alpha \in F^{\times}$ and $h\in \widetilde{G}$, let \begin{equation}\label{definiton of sigma-alpha}\sigma_{\alpha}(h)=\langle \alpha, \Delta(h)\rangle \sigma(h).\end{equation} Then 
\begin{enumerate}
\item[\emph{(1)}] $\sigma_{\alpha}$ is an anti-automorphism.
\vspace{0.1 cm}
\item[\emph{(2)}] $\sigma_{\alpha}$ is an involution. 
\vspace{0.1 cm}
\item[\emph{(3)}] $\sigma_{\alpha}$ is a lift of $\tau$. 
\end{enumerate}
\end{lemma}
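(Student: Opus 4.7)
The plan is to prove all three properties by leveraging two simple observations: first, the scalar $\langle \alpha, \Delta(h)\rangle$ lies in $\mu_n$, which is central in $\widetilde{G}$ and projects to the identity under $p$; second, everything we need about $\sigma$ itself (multiplicativity properties, $\sigma(\epsilon)=\epsilon^{-1}$, $\sigma^2=\mathrm{id}$, and the fact that $p\circ\sigma = \tau\circ p$) has already been established in Lemma~\ref{Properties of sigma}. Since $\Delta$ factors through $p$ and is a homomorphism on $G$, we also have $\Delta(h_1 h_2) = \Delta(h_1)\Delta(h_2)$ and $\Delta(\sigma(h))=\Delta(h)$ (the latter was noted inside the proof of part (3) of Lemma~\ref{Properties of sigma}).

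For (1), I would compute $\sigma_{\alpha}(h_1 h_2) = \langle \alpha, \Delta(h_1)\Delta(h_2)\rangle\,\sigma(h_2)\sigma(h_1)$ using bilinearity of the Hilbert symbol (Proposition~\ref{properties of the Hilbert symbol}(i)) together with the anti-automorphism property of $\sigma$, and then rearrange using centrality of $\mu_n$ to recognize this as $\sigma_{\alpha}(h_2)\sigma_{\alpha}(h_1)$.

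For (2), the calculation is $\sigma_{\alpha}(\sigma_{\alpha}(h)) = \langle \alpha, \Delta(\sigma_{\alpha}(h))\rangle\,\sigma(\sigma_{\alpha}(h))$. Since $\langle\alpha,\Delta(h)\rangle \in \mu_n$ is identified with $(I_2,\epsilon)$ and has $\Delta$-value $1$, we get $\Delta(\sigma_{\alpha}(h))=\Delta(\sigma(h))=\Delta(h)$. Using that $\sigma$ is an anti-automorphism with $\sigma(\epsilon)=\epsilon^{-1}$ (Lemma~\ref{Properties of sigma}(2)) and $\sigma^2=\mathrm{id}$ (Lemma~\ref{Properties of sigma}(3)), I would expand
\begin{equation*}
\sigma(\sigma_{\alpha}(h)) = \sigma\bigl(\langle \alpha, \Delta(h)\rangle \sigma(h)\bigr) = \sigma(\sigma(h))\,\sigma(\langle \alpha, \Delta(h)\rangle) = h\,\langle \alpha, \Delta(h)\rangle^{-1},
\end{equation*}
so that $\sigma_{\alpha}(\sigma_{\alpha}(h)) = \langle \alpha, \Delta(h)\rangle\cdot h\cdot \langle \alpha, \Delta(h)\rangle^{-1}=h$ by centrality of $\mu_n$.

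For (3), I would simply note that $p\bigl(\langle \alpha,\Delta(h)\rangle \sigma(h)\bigr)=p(\sigma(h))$ because the Hilbert-symbol factor lies in $\mu_n=\ker p$, and then apply Lemma~\ref{Properties of sigma}(5) to conclude $p(\sigma_{\alpha}(h))=\tau(p(h))$. There is no real obstacle; the only subtlety worth being careful about is the bookkeeping with $\Delta(\sigma_{\alpha}(h))$ in part (2), where one must remember that elements of $\mu_n\subset\widetilde{G}$ have trivial $\Delta$-value.
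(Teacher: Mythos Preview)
Your proposal is correct and follows essentially the same approach as the paper: all three parts are reduced to the corresponding properties of $\sigma$ from Lemma~\ref{Properties of sigma}, using bilinearity of the Hilbert symbol, centrality of $\mu_n$, $\sigma(\epsilon)=\epsilon^{-1}$, and $\Delta(\sigma_\alpha(h))=\Delta(h)$. If anything, your argument for (3) is slightly more explicit than the paper's, which computes $p(\sigma(h))=\tau(p(h))$ and then tacitly uses that the $\mu_n$-factor lies in $\ker p$.
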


\begin{proof} \hfill\\

\begin{enumerate}
\item[(1)] For $g,h \in \widetilde{G}$, we have
\begin{align*}
\sigma_{\alpha}(gh) &= \langle \alpha, \Delta(gh)\rangle \sigma(gh)\\
&= \langle \alpha, \Delta(g)\Delta(h)\rangle \sigma(h)\sigma(g)\\
&= \langle \alpha, \Delta(g)\rangle \langle \alpha, \Delta(h)\rangle \sigma(h)\sigma(g)\\
&= \langle \alpha, \Delta(h) \rangle \sigma(h) \langle \alpha, \Delta(g) \rangle \sigma(g)\\
&= \sigma_{\alpha}(h)\sigma_{\alpha}(g). \\
\end{align*}

\item[(2)] Let $\sigma_{\alpha}(h)=y$. We have
\begin{align*}
\sigma_{\alpha}(\sigma_{\alpha}(h)) &= \sigma_{\alpha}(y)\\
&= \langle \alpha, \Delta(y)\rangle \sigma(y)\\
&= \langle \alpha, \Delta(h) \rangle \sigma(\langle \alpha, \Delta(h) \rangle\sigma(h))\\
&= \langle \alpha, \Delta(h) \rangle \langle \alpha, \Delta(h) \rangle ^{-1}\sigma(\sigma(h))\\
&= h. \\
\end{align*}

\item[(3)] Let $h=(g, \epsilon)\in \widetilde{G}$. Suppose that $\sigma((g,1))= (\tau(g), \xi)$. We have
    \[\tau(p(h))= \tau(p((g,\epsilon)))= \tau(g)\]
and 
    \[ p(\sigma(h))= p(\sigma((g,\epsilon)))= p((\tau(g), \epsilon^{-1} \xi))= \tau(g).\]
Thus $\sigma_{\alpha}$ is a lift of $\tau$. 
\end{enumerate}
\end{proof}

\begin{remark} We have used the following observation in part (2) of Lemma~\ref{properties of sigms-alpha}. Let $h=(x, \epsilon)$ and $y=\sigma_{\alpha}(h)$. Then we get $$y=( \tau(x),* ).$$ 
Using the extension of $\Delta$ to $\widetilde{G}$ as defined earlier, we see that $$\Delta(y)=\Delta(\tau(x))=\Delta(h).$$
\end{remark}

It follows from Proposition~\ref{description of the set of liftings} that any lift of $\tau$ is of the form $\sigma_{\alpha}$ for $\alpha\in F^{\times}$. We now show that all the lifts $\sigma_{\alpha}$ also satisfy a similar conjugacy property like $\sigma$. To be precise, we have

\begin{theorem}\label{sigma-alpha-has-similar-propertyas-sigma}
Let $h=(g, \eta) \in \widetilde{G}$. We have \[\sigma_{\alpha}(h)= c(g^{-1},g)^{-2} \eta^{-2} zhz^{-1},\] 
for some $z \in \widetilde{G}$.
\end{theorem}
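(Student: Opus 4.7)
The plan is to reduce the statement for $\sigma_\alpha$ to the already-proved conjugacy property for $\sigma$ (Theorem~\ref{conjugacy property of sigma}), by absorbing the scalar $\langle \alpha, \Delta(h) \rangle$ appearing in the definition \eqref{definiton of sigma-alpha} into the conjugating element. Since $\sigma_\alpha(h) = \langle \alpha, \Delta(h) \rangle \sigma(h)$, applying Theorem~\ref{conjugacy property of sigma} immediately gives some $w \in \widetilde{G}$ with
\[
\sigma_\alpha(h) \;=\; \langle \alpha, \Delta(h) \rangle \, c(g^{-1}, g)^{-2} \eta^{-2} \, w h w^{-1}.
\]

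The main observation is that the central-ish element $\tilde{z}(\alpha)$ realizes the Hilbert symbol $\langle \alpha, \Delta(\cdot) \rangle$ via conjugation. Indeed, property (i) of Proposition~\ref{properties to check sigma is an involution} states $x \tilde{z}(\lambda) = \langle \Delta(x), \lambda \rangle \tilde{z}(\lambda) x$ for any $x \in \widetilde{G}$, and rearranging (using property (iv) of Proposition~\ref{properties of the Hilbert symbol}) yields the identity
\[
\tilde{z}(\lambda) \, x \, \tilde{z}(\lambda)^{-1} \;=\; \langle \lambda, \Delta(x) \rangle \, x.
\]
I would apply this with $\lambda = \alpha$ and $x = w h w^{-1}$. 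Because $\Delta$ is a homomorphism on $G$ and was extended to $\widetilde{G}$ by $\Delta((g, \epsilon)) = \Delta(g)$, we have $\Delta(w h w^{-1}) = \Delta(h)$, so
\[
\langle \alpha, \Delta(h) \rangle \, w h w^{-1} \;=\; \tilde{z}(\alpha) \, w h w^{-1} \, \tilde{z}(\alpha)^{-1} \;=\; (\tilde{z}(\alpha) w) \, h \, (\tilde{z}(\alpha) w)^{-1}.
\]
Setting $z = \tilde{z}(\alpha) w$ therefore gives $\sigma_\alpha(h) = c(g^{-1}, g)^{-2} \eta^{-2} z h z^{-1}$, as required.

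There is no substantive obstacle, since Theorem~\ref{conjugacy property of sigma} has already done the heavy lifting; the only point to be careful about is the direction of the Hilbert symbol (exchanging $\langle \Delta(h), \alpha \rangle$ and $\langle \alpha, \Delta(h) \rangle$ via property (iv) of Proposition~\ref{properties of the Hilbert symbol}) so that conjugation by $\tilde{z}(\alpha)$ produces the exact scalar $\langle \alpha, \Delta(h) \rangle$ needed.
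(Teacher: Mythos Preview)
Your proof is correct and follows essentially the same approach as the paper: both reduce to Theorem~\ref{conjugacy property of sigma} and absorb the extra Hilbert symbol $\langle \alpha, \Delta(h)\rangle$ into the conjugating element via the commutation relation (i) of Proposition~\ref{properties to check sigma is an involution}, i.e.\ conjugation by $\tilde z(\alpha^{\pm 1})$. Your version is in fact slightly more streamlined than the paper's, which makes an unnecessary case distinction on whether $\Delta(h)\in (F^\times)^n$ and routes the computation through $\sigma(uhu^{-1})$ before invoking Theorem~\ref{conjugacy property of sigma}, whereas you apply that theorem first and then conjugate by $\tilde z(\alpha)$ directly.
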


\begin{proof} Suppose $\Delta(h)\in (F^{\times})^{n}$. Then $\langle \alpha, \Delta(h)\rangle =1$ and $\sigma_{\alpha}(h)=\sigma(h)$. Thus, the theorem holds in this case. Suppose $\Delta(h)\not\in (F^{\times})^{n}$. Taking $\lambda=\alpha^{-1}$ in (i) of Proposition~\ref{properties to check sigma is an involution}, and $u=\tilde{z}(\alpha^{-1})$ we get
\[\langle \alpha, \Delta(h)\rangle ^{-1} h = uhu^{-1}.\]
Indeed, 
\[
    hu = h\tilde{z}(\alpha^{-1}) = \langle \alpha^{-1}, \Delta(h)\rangle ^{-1} \tilde{z}(\alpha^{-1})h = \langle \alpha, \Delta(h)\rangle u h
.\]
Thus,
\begin{align*}\sigma_{\alpha}(h) &= \langle \alpha, \Delta(h)\rangle \sigma(h) \\ 
&= \sigma(\langle \alpha, \Delta(h)\rangle^{-1}h) \\
&= \sigma(uhu^{-1}) \\
&= \sigma(u)^{-1}\sigma(h) \sigma(u).
\end{align*}
Since $\widetilde{G}=S$, we have $\sigma(h)= c(g^{-1},g)^{-2} \eta^{-2}vhv^{-1}$, for some $v \in \widetilde{G}$. \\\\
Thus, we get
\[
    \sigma_{\alpha}(h) =c(g^{-1},g)^{-2} \eta^{-2} \sigma(u)^{-1} v h v^{-1} \sigma(u).
\]
Now taking $z=\sigma(u)^{-1} v$, we get
\begin{align*}
    \sigma_{\alpha}(h)=c(g^{-1},g)^{-2} \eta^{-2} z h z^{-1}.
\end{align*}
Hence the result. 
\end{proof}

\section{Main Theorem}\label{main theorem}

Throughout, we let $\mu_{\widetilde{G}}$ denote the Haar measure on $\widetilde{G}$. We write $\Aut_{c}(\widetilde{G})$ for the group of continuous automorphisms of $\widetilde{G}$ and $\mathbb{R}^{\times}_{> 0}$ for the multiplicative group of positive real numbers. \\

\begin{lemma}\label{results-on-haarmeasure}
  Let $\gamma\in \Aut_{c}(\widetilde{G})$. Then, the following statements hold:
  \begin{enumerate}
      \item[\emph{(1)}] There exists $c_{\gamma}>0$ such that $\mu_{\widetilde{G}}\circ \gamma = c_{\gamma}\mu_{\widetilde{G}}.$ 
      \smallskip

      \item[\emph{(2)}] The map $\gamma\mapsto c_{\gamma}$ from $\Aut_{c}(\widetilde{G})$ to $\mathbb{R}^{\times}_{>0}$ is a homomorphism.
      \smallskip

      \item[\emph{(3)}]   If $\gamma^{2}=1,$ then  $\mu_{\widetilde{G}}\circ \gamma =\mu_{\widetilde{G}}.$
  \end{enumerate}
\end{lemma}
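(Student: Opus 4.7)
The plan is to deduce all three statements from the uniqueness (up to positive scalar) of left Haar measure on the locally compact group $\widetilde{G}$; once part (1) produces the scalar $c_{\gamma}$, parts (2) and (3) are formal consequences.

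For (1), I would define the set function $\nu(E) := \mu_{\widetilde{G}}(\gamma(E))$ on Borel subsets $E \subset \widetilde{G}$. Since $\gamma$ is a continuous automorphism and hence a homeomorphism, $\nu$ is a nonzero Radon measure. Because $\gamma$ is a group homomorphism, $\gamma(gE) = \gamma(g)\gamma(E)$, so the left invariance of $\mu_{\widetilde{G}}$ yields
\[
\nu(gE) \;=\; \mu_{\widetilde{G}}(\gamma(g)\gamma(E)) \;=\; \mu_{\widetilde{G}}(\gamma(E)) \;=\; \nu(E)
\]
for every $g \in \widetilde{G}$. Thus $\nu$ is a left Haar measure on $\widetilde{G}$, and uniqueness of Haar measure furnishes a constant $c_{\gamma} > 0$ with $\nu = c_{\gamma}\,\mu_{\widetilde{G}}$.

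For (2), for any Borel $E$ the chain
\[
\mu_{\widetilde{G}}(\gamma_{1}\gamma_{2}(E)) \;=\; c_{\gamma_{1}}\,\mu_{\widetilde{G}}(\gamma_{2}(E)) \;=\; c_{\gamma_{1}}c_{\gamma_{2}}\,\mu_{\widetilde{G}}(E)
\]
forces $c_{\gamma_{1}\gamma_{2}} = c_{\gamma_{1}}c_{\gamma_{2}}$, and $c_{1} = 1$ is immediate from the definition, giving the homomorphism property. Part (3) then follows at once: if $\gamma^{2} = 1$, then $c_{\gamma}^{2} = c_{\gamma^{2}} = c_{1} = 1$, and positivity of $c_{\gamma}$ forces $c_{\gamma} = 1$, so $\mu_{\widetilde{G}} \circ \gamma = \mu_{\widetilde{G}}$. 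There is no serious obstacle here; the only point worth verifying carefully is that $\nu$ inherits Radon regularity from $\mu_{\widetilde{G}}$, which is automatic since $\gamma$ is a homeomorphism of $\widetilde{G}$.
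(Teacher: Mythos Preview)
Your argument is correct and is precisely the standard proof via uniqueness of Haar measure. The paper itself does not give an in-text proof of this lemma at all; it simply cites the corresponding lemmas in \cite{KumAji}. Your self-contained treatment is therefore at least as complete as what appears here, and almost certainly matches what is written in the cited reference.
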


\begin{proof} We refer the reader to Section 5 (Lemma 5.7, Lemma 5.8 and Lemma 5.9) in \cite{KumAji} for the proof. 
\end{proof}

\begin{lemma}\label{restriction-mu-n-of-central-character-injective}
	Let $(\pi, V)$ be an irreducible smooth genuine representation of $\widetilde{G}.$ Let $\omega_\pi$ be the central character of $\pi.$ Then, the following statements hold:
	\begin{enumerate}
		\item[\emph{(1)}] The restriction ${\omega_\pi}\big |\mu_n$ of $\omega_\pi$ to $\mu_n$ is injective.
		\smallskip
		
		\item[\emph{(2)}] $\omega_\pi(\eta^2) = 1,$ for every $\eta\in \mu_n$ if and only if $n=2.$
	\end{enumerate}
\end{lemma}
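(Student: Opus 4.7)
The proof plan has two steps, both of which amount to unpacking the definition of \emph{genuine}. By hypothesis $\pi$ is genuine with respect to the fixed injective character $\xi: \mu_n \to \mathbb{C}^\times$, so $\pi(\epsilon) = \xi(\epsilon) \cdot 1_V$ for every $\epsilon \in \mu_n$, where $\epsilon$ is identified with $(I_2,\epsilon) \in \widetilde{G}$. Since $\mu_n$ is the kernel of the projection $p: \widetilde{G} \to G$ in a central extension, it lies in the center of $\widetilde{G}$, and so $\omega_\pi$ is genuinely defined on all of $\mu_n$ with $\omega_\pi|_{\mu_n} = \xi$. The injectivity claim in part (1) then follows at once from the injectivity of $\xi$.

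For part (2), the forward direction is immediate: when $n=2$, every $\eta \in \mu_2 = \{\pm 1\}$ satisfies $\eta^2 = 1$, hence $\omega_\pi(\eta^2) = \omega_\pi(1) = 1$. For the converse, I will use multiplicativity of the central character to rewrite the hypothesis as $\omega_\pi(\eta)^2 = 1$ for every $\eta \in \mu_n$. By part (1), the image $\omega_\pi(\mu_n) \subset \mathbb{C}^\times$ is a cyclic group of order exactly $n$ (using also the standing assumption $|\mu_n| = n$), and by the hypothesis each of its elements satisfies $z^2 = 1$. A cyclic group of order $n$ in which every element has order dividing $2$ must itself have order dividing $2$, so $n \mid 2$; combined with $n \geq 2$ this forces $n = 2$.

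There is no real obstacle here: both statements are formal consequences of the genuineness condition together with the structure of $\mu_n$ as a cyclic group of order $n$. The only point worth pinning down carefully is the identification of $\mu_n$ with a central subgroup of $\widetilde{G}$, so that restricting $\omega_\pi$ to $\mu_n$ is meaningful and equals $\xi$; this is built into the construction of the metaplectic cover recalled in Section \ref{preliminaries}.
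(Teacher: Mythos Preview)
Your proof is correct and follows essentially the same approach as the paper: both identify $\omega_\pi|_{\mu_n}$ with the injective character $\xi$ via the genuineness condition, and then for part (2) argue that $\omega_\pi(\eta)^2=1$ for all $\eta$ forces the injective image of $\mu_n$ to have at most two elements. The paper phrases the converse in (2) as a pigeonhole contradiction (for $n\geq 3$ two distinct $\eta_i$ would share the same value $\pm 1$), while you phrase it via the order of a cyclic group, but these are the same argument.
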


\begin{proof}
	Let $\eta\in \mu_n$. Since $\pi$ is a genuine representation, we have $\pi(\eta) = \xi(\eta) 1_V.$
 Also, we have that $\mu_n$ is contained in the center of $\widetilde{G}$. Thus applying Schur's lemma, we get $\pi(\eta) = \omega_\pi(\eta)1_V$. It follows that $ \omega_\pi(\eta) = \xi(\eta).$ This proves (1). If $n=2$, clearly we have $\omega_\pi(\eta^2) = 1.$ Suppose that $\omega_\pi(\eta^2) = 1.$ This implies that  $\omega_\pi(\eta) = \pm 1.$  If $n \geq 3,$ there exist $\eta_1, \eta_2$ in $\mu_n$ with $\eta_1 \neq \eta_2$ such that $\omega_\pi(\eta_1) = \omega_\pi(\eta_2),$ which contradicts part (1). This completes the proof of (2).
\end{proof}

\begin{lemma}\label{character-contragredient}
For $g\in \widetilde{G}_{\reg}$, we have $$\Theta_{\pi^{\vee}}(g)=\Theta_{\pi}(g^{-1}).$$
\end{lemma}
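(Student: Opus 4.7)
The plan is to first establish the identity at the level of distribution characters, namely $\Theta_{\pi^{\vee}}(f) = \Theta_{\pi}(\check{f})$ for all $f \in C_{c}^{\infty}(\widetilde{G})$, where $\check{f}(g) := f(g^{-1})$, and then transfer this to the character functions using the regularity theorem already invoked in the paper.

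First, I would unwind the definition of the contragredient. By construction, for $g \in \widetilde{G}$, $v \in V$, and $v^{\vee} \in V^{\vee}$, we have $\langle \pi^{\vee}(g) v^{\vee}, v \rangle = \langle v^{\vee}, \pi(g^{-1}) v \rangle$. Integrating against $f \in C_{c}^{\infty}(\widetilde{G})$ gives
\[
\langle \pi^{\vee}(f) v^{\vee}, v \rangle = \int_{\widetilde{G}} f(g) \langle v^{\vee}, \pi(g^{-1}) v \rangle\, d\mu_{\widetilde{G}}(g) = \left\langle v^{\vee}, \int_{\widetilde{G}} f(g) \pi(g^{-1}) v \, d\mu_{\widetilde{G}}(g) \right\rangle.
\]
Next I would use that $\widetilde{G}$ is unimodular (since $G = \GL(2,F)$ is unimodular and $\widetilde{G}$ is a central extension of $G$ by the finite group $\mu_{n}$), so the change of variable $g \mapsto g^{-1}$ preserves $\mu_{\widetilde{G}}$. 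Applying it to the inner integral rewrites it as $\pi(\check{f}) v$, so that $\pi^{\vee}(f)$ is the canonical dual of $\pi(\check{f})$ under the pairing $V \times V^{\vee} \to \mathbb{C}$.

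Second, because $\pi$ is admissible, the operator $\pi(\check{f})$ has finite-dimensional image, and any operator of finite rank has the same trace as its dual with respect to a non-degenerate pairing. Hence
\[
\Theta_{\pi^{\vee}}(f) = \tr\bigl(\pi^{\vee}(f)\bigr) = \tr\bigl(\pi(\check{f})\bigr) = \Theta_{\pi}(\check{f}).
\]
Third, I would invoke Theorem~\ref{regularity theorem} (and its metaplectic analogue in the references cited after it) to represent both distribution characters by locally integrable functions that are locally constant on $\widetilde{G}_{\reg}$. Unwinding gives
\[
\int_{\widetilde{G}} f(g)\, \Theta_{\pi^{\vee}}(g)\, d\mu_{\widetilde{G}}(g) = \int_{\widetilde{G}} f(g^{-1})\, \Theta_{\pi}(g)\, d\mu_{\widetilde{G}}(g) = \int_{\widetilde{G}} f(g)\, \Theta_{\pi}(g^{-1})\, d\mu_{\widetilde{G}}(g),
\]
where the last equality is again the unimodular change of variable. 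Since this holds for all $f \in C_{c}^{\infty}(\widetilde{G})$, the two locally integrable functions $\Theta_{\pi^{\vee}}(g)$ and $\Theta_{\pi}(g^{-1})$ agree almost everywhere, and hence agree on the open dense set $\widetilde{G}_{\reg}$ where both are locally constant.

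The argument is essentially the standard duality argument for characters of admissible representations, so the only real point requiring care is that the basic facts it uses (unimodularity, admissibility implying finite rank of $\pi(f)$, equality of trace with the dual trace, and the Harish-Chandra style regularity statement) transfer to the metaplectic setting; all of these are either immediate from the definitions or already recorded in the preliminaries.
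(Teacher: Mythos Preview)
Your argument is correct and is exactly the standard duality argument one expects here. The paper does not actually supply a proof of this lemma; it simply refers the reader to Lemma~5.11 in \cite{KumAji}, whose proof is the same computation you outline (distribution identity $\Theta_{\pi^{\vee}}(f)=\Theta_{\pi}(\check f)$ via unimodularity and duality of finite-rank operators, followed by the regularity theorem). So your proposal and the paper's (cited) proof coincide.
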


\begin{proof} 
We refer the reader to Section 5 (Lemma 5.11) in \cite{KumAji} for the proof. 
\end{proof}

\begin{lemma}
 Let $\rho \in \Aut_{c}(\widetilde{G})$ such that it is a lift of the automorphism  $h \mapsto \tau(h^{-1})$ of $G.$  Then,  
 $$\rho(g) \text{ and } \eta^2 g^{-1} \text{are in }  \widetilde{G}_{\reg},$$
 for any $g\in \widetilde{G}_{\reg}$ and  $\eta \in \mu_n.$
\end{lemma}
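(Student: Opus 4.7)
The plan is to reduce everything to the underlying group $G = \GL(2,F)$, since $\widetilde{G}_{\reg} = p^{-1}(G_{\reg})$ by definition. Thus, for an element $x \in \widetilde{G}$, we have $x \in \widetilde{G}_{\reg}$ if and only if $p(x) \in G_{\reg}$. The main point is then that $G_{\reg}$, the set of regular semisimple elements, is stable under the two operations that appear: inversion, and the map $\tau$.

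First I would handle $\eta^2 g^{-1}$. Since $\eta^2 \in \mu_n$ is central in $\widetilde{G}$ and projects to $I_2 \in G$, we have $p(\eta^2 g^{-1}) = p(g)^{-1}$. Now an element of $G = \GL(2,F)$ is regular semisimple precisely when its two eigenvalues (in an algebraic closure) are distinct and nonzero. Inversion sends eigenvalues $\{\alpha,\beta\}$ to $\{\alpha^{-1}, \beta^{-1}\}$, which is again a set of distinct nonzero elements. Hence $p(g) \in G_{\reg}$ forces $p(g)^{-1} \in G_{\reg}$, so $\eta^2 g^{-1} \in \widetilde{G}_{\reg}$.

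Next I would handle $\rho(g)$. By hypothesis $\rho$ is a lift of the automorphism $h \mapsto \tau(h^{-1})$ of $G$, so $p(\rho(g)) = \tau(p(g)^{-1})$. By the previous paragraph, $p(g)^{-1} \in G_{\reg}$. It remains to observe that $\tau$ preserves $G_{\reg}$: indeed, for any $x \in G$, the matrix $x^{\top}$ has the same characteristic polynomial as $x$, and conjugation by $w_0$ does not change the characteristic polynomial either, so $\tau(x) = w_0 x^{\top} w_0$ has the same eigenvalues as $x$. Consequently $\tau(p(g)^{-1}) \in G_{\reg}$, and so $\rho(g) \in \widetilde{G}_{\reg}$.

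There is really no obstacle here; the statement is essentially a bookkeeping lemma asserting that the two natural operations used later in the paper (inversion and the lifted involution) keep us inside the regular set, so that the character $\Theta_{\pi}$ may be evaluated at these points. The only thing to keep track of is that the $\mu_n$-component is irrelevant for regularity, which is immediate from $\widetilde{G}_{\reg} = p^{-1}(G_{\reg})$.
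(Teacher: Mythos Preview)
Your proof is correct and follows essentially the same approach as the paper's own argument: reduce to the base group via $\widetilde{G}_{\reg}=p^{-1}(G_{\reg})$, compute $p(\eta^2 g^{-1})=p(g)^{-1}$ and $p(\rho(g))=\tau(p(g)^{-1})$, and observe that $G_{\reg}$ is preserved by inversion and by $\tau$. You supply more justification for these closure properties (via eigenvalues and characteristic polynomials) than the paper, which simply asserts them, but the logic is identical.
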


\begin{proof}
  Let $\eta \in \mu_n$ and  $g = (x, \epsilon) \in \widetilde{G}_{\reg}.$   Since $x \in G_{\reg}$ and $p(\eta^2 g^{-1}) = x^{-1}$, it follows that $\eta^2 g^{-1} \in \widetilde{G}_{\reg}$. Since $\rho$ is a lift of the automorphism $h \mapsto \tau(h^{-1})$, clearly we have $p(\rho(g))=\tau(x^{-1}) \in G_{\reg}$. Thus $\rho(g) \in \widetilde{G}_{\reg}.$
\end{proof}

Let $\pi$ be an irreducible admissible genuine representation of $\widetilde{G}$. For $f\in C_{c}^{\infty}(\widetilde{G})$, $\rho\in \Aut(\widetilde{G})$ we define 
\[f^{\rho}(g)=f(\rho(g)) \text{ and } \pi^{\rho}(g)=\pi(\rho(g)).\]

\begin{proposition}\label{rho-dualizing-involution}
   Let $\rho \in \Aut_{c}(\widetilde{G})$ such that $\rho^2 =1$ and $\rho(g)$ is conjugate to $\eta^2g^{-1},$ for any $g = (x, \eta)\in \widetilde{G}$. Also, assume that $\rho$ is a lift of the automorphism  $h \mapsto \tau(h^{-1})$ of $G.$  Let $\pi$ be  an irreducible admissible genuine representation of $\widetilde{G}$ and $\omega_\pi$ be the central character of $\pi.$
   Suppose that $\pi^{\rho}$ is isomorphic to $\pi^{\vee}.$ Then 
   \[\omega_{\pi}(\eta^2) =1,\] 
   for all $\eta \in \mu_n.$  
\end{proposition}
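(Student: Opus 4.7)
The plan is to turn the isomorphism $\pi^{\rho}\simeq \pi^{\vee}$ into a pointwise identity of characters on $\widetilde{G}_{\reg}$, and then evaluate this identity at a carefully chosen regular element whose $\mu_{n}$-component is a prescribed $\eta$, thereby forcing $\omega_{\pi}(\eta^{2}) = 1$.

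First I would compute the distribution character of $\pi^{\rho}$. Since $\rho \in \Aut_{c}(\widetilde{G})$ is an involution, Lemma \ref{results-on-haarmeasure}(3) gives $\mu_{\widetilde{G}} \circ \rho = \mu_{\widetilde{G}}$. Changing variables $g \mapsto \rho(g)$ in the defining integral for $\pi^{\rho}(f)$ and taking traces then yields
\[ \Theta_{\pi^{\rho}}(g) = \Theta_{\pi}(\rho(g)), \quad g \in \widetilde{G}_{\reg}. \]
Combining this with Lemma \ref{character-contragredient}, the hypothesis $\pi^{\rho} \simeq \pi^{\vee}$ becomes
\[ \Theta_{\pi}(\rho(g)) = \Theta_{\pi}(g^{-1}) \quad \text{for all } g \in \widetilde{G}_{\reg}. \]

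Next I would feed in the two conjugation properties. Write $g = (x, \eta) \in \widetilde{G}_{\reg}$. By hypothesis, $\rho(g)$ is conjugate in $\widetilde{G}$ to $\eta^{2} g^{-1}$. Applying Harish-Chandra's conjugation invariance (Theorem \ref{regularity theorem}) and then the central character scaling identity (Lemma \ref{central-action}(2)) transforms the above into
\[ \omega_{\pi}(\eta^{2})\, \Theta_{\pi}(g^{-1}) = \Theta_{\pi}(g^{-1}), \quad g = (x, \eta) \in \widetilde{G}_{\reg}. \tag{$\ast$} \]

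It remains, for each fixed $\eta \in \mu_{n}$, to exhibit a regular element with second coordinate $\eta$ at which $\Theta_{\pi}(g^{-1}) \neq 0$. By Lemma \ref{character-non-zero}, choose $g_{0} \in G_{\reg}$ with $\Theta_{\pi}((g_{0}, 1)) \neq 0$, and set $g := (g_{0}^{-1}, \eta) \in \widetilde{G}_{\reg}$. A direct computation gives $g^{-1} = (g_{0}, c(g_{0}^{-1}, g_{0})^{-1}\eta^{-1})$, so Lemma \ref{central-action}(2) shows
\[ \Theta_{\pi}(g^{-1}) = \omega_{\pi}\!\left(c(g_{0}^{-1}, g_{0})^{-1}\eta^{-1}\right) \Theta_{\pi}((g_{0}, 1)). \]
The scalar factor is non-zero because $\omega_{\pi}|_{\mu_{n}}$ is the injective character $\xi$ by Lemma \ref{restriction-mu-n-of-central-character-injective}(1), hence $\Theta_{\pi}(g^{-1}) \neq 0$. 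Substituting this $g$ into $(\ast)$ yields $\omega_{\pi}(\eta^{2}) = 1$ for every $\eta \in \mu_{n}$.

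I do not foresee a serious obstacle: the only step requiring genuine care is the identification $\Theta_{\pi^{\rho}}(g) = \Theta_{\pi}(\rho(g))$, whose justification rests on the Haar-measure invariance under $\rho$ provided by Lemma \ref{results-on-haarmeasure}(3). Everything else is a short calculation combining the central cocycle with the character identities already assembled in Lemmas \ref{central-action}, \ref{character-contragredient}, \ref{character-non-zero}, and \ref{restriction-mu-n-of-central-character-injective}.
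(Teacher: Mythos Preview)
Your proposal is correct and follows essentially the same route as the paper: both arguments turn $\pi^{\rho}\simeq\pi^{\vee}$ into the identity $\Theta_{\pi}(\rho(g))=\Theta_{\pi}(g^{-1})$ on $\widetilde{G}_{\reg}$ via the Haar-measure invariance of Lemma~\ref{results-on-haarmeasure}(3) and Lemma~\ref{character-contragredient}, then apply the conjugacy hypothesis together with Lemma~\ref{central-action}(2) and evaluate at $g=(g_{0}^{-1},\eta)$ using Lemma~\ref{character-non-zero}. The only cosmetic difference is that you work pointwise with the character function on $\widetilde{G}_{\reg}$, while the paper threads the computation through test functions before passing to the pointwise identity; your explicit verification that $\Theta_{\pi}(g^{-1})\neq 0$ via the cocycle factor is a slight elaboration of what the paper leaves implicit.
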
 

\begin{proof}

 Let   $f\in C_{c}^{\infty}(\widetilde{G})$. Since $\pi^{\rho}\cong \pi^{\vee},$ it follows that $\Theta_{\pi^{\rho}}(f) = \Theta_{\pi^{\vee}}(f).$  It is straightforward to verify that $\Theta_{\pi^{\rho}}(f) = \Theta_{\pi}(f^{\rho}).$  We have, 
 \begin{align}\label{pi-rho-isomorphic-pi-check-1}
      \Theta_{\pi}(f^{\rho}) &=  \int_{\widetilde{G}} f^{\rho}(g) \Theta_{\pi}(g)dg \nonumber \\
      &= \int_{\widetilde{G}} f(\rho(g)) \Theta_{\pi}(g)dg \nonumber \\
       &=  \int_{\widetilde{G}} f(g) \Theta_{\pi}(\rho(g))dg ~~~~~\mbox{($g \mapsto \rho(g)$ and using (3) of Lemma \ref{results-on-haarmeasure})}.
             \end{align}

       Let $g = (x, \eta) \in \widetilde{G}_{\reg}.$ Since $\rho(g)$ is conjugate to $\eta^2 g^{-1}$ and $\Theta_{\pi}$ is constant on (regular semi-simple) conjugacy classes, \eqref{pi-rho-isomorphic-pi-check-1} yields \begin{align}\label{pi-rho-isomorphic-pi-check-3}
      \Theta_{\pi}(f^{\rho}) &=  \int_{\widetilde{G}} f(g) \Theta_{\pi}(\eta^2 g^{-1})dg.
       \end{align}
Also, \begin{align}\label{pi-rho-isomorphic-pi-check-4}
      \Theta_{\pi^{\vee}}(f) &=  \int_{\widetilde{G}} f(g) \Theta_{\pi^{\vee}}( g)dg \nonumber \\
       &= \int_{\widetilde{G}} f(g) \Theta_{\pi}( g^{-1})dg ~~~~~~\mbox{ (by Lemma \ref{character-contragredient})}.
       \end{align}
It follows from \eqref{pi-rho-isomorphic-pi-check-3} and \eqref{pi-rho-isomorphic-pi-check-4} that 
\begin{equation}\label{theta-pi-equalon-etasquareg-inverse-and-ginverse}
    \Theta_{\pi}(\eta^2 g^{-1}) = \Theta_{\pi}(g^{-1}).
\end{equation}
      
       By Lemma \ref{character-non-zero}, $\Theta_{\pi}((g_0, 1)) \neq 0$ for some $g_0 \in G_{\reg}.$ For $ \eta \in \mu_n,$  we have $(g_0, \eta) \in \widetilde{G}_{\reg}.$ By \eqref{theta-pi-equalon-etasquareg-inverse-and-ginverse}, $\Theta_{\pi}(\eta^2 (g_0^{-1}, \eta)^{-1}) = \Theta_{\pi}((g_0^{-1}, \eta)^{-1}).$ 
      Using (2) of Lemma \ref{central-action} proof follows.
  
\end{proof}

\subsection{Proof of the Main Theorem}\label{Proof-of-Main-Theorem}

 Let $g = (x, \eta) \in \widetilde{G}.$ For $\alpha\in F^{\times},$  let $\rho_{\alpha}(g)= \sigma_{\alpha}(g^{-1})$, where $\sigma_{\alpha}$ is given as in \eqref{definiton of sigma-alpha}. Clearly, $\rho_{\alpha}^2=1.$ Since $\sigma_{\alpha}$ is continuous (see Proposition~\ref{continuity of lift}), it follows that $\rho_{\alpha}\in \Aut_{c}(\widetilde{G})$ for all $\alpha\in F^{\times}$. Using Theorem \ref{sigma-alpha-has-similar-propertyas-sigma}, it follows that there exists $z\in \widetilde{G}$ such that $\rho_{\alpha}(g)$ is a conjugate of $\eta^2 g^{-1}$. Indeed, we have 
\begin{align*}
\rho_{\alpha}(g) &= \sigma_{\alpha}((x^{-1}, c(x, x^{-1})^{-1} \eta^{-1}))\\
&= c(x, x^{-1})^{-2} (c(x, x^{-1})^{-1} \eta^{-1})^{-2} z  g^{-1} z^{-1} \\
&= z (\eta^2 g^{-1}) z^{-1}. 
\end{align*}
By Proposition~\ref{rho-dualizing-involution} and part (2) of Lemma \ref{restriction-mu-n-of-central-character-injective}, we have $n=2$. Conversely, if $n=2$ then for each $\alpha\in F^{\times}$, $\sigma_{\alpha}$ is a dualizing involution of $\widetilde{G}$ (see \cite{KumAji} and \cite{KumEkt}).

\bibliographystyle{amsplain}
\bibliography{metaplectic}
\end{document}